\newcommand{\PS}{0.0042}
\newcommand{\xyz}[4]{%
  \pgfmathsetmacro{\tmpx}{\PS*(((-(#2))+(#3))/1.41421356)}%
  \pgfmathsetmacro{\tmpy}{\PS*(((-(#2))-(#3)+2*(#4))/2.44948975)}%
  \coordinate (#1) at (\tmpx,\tmpy);%
}
\definecolor{nA}{rgb}{0.86,0.08,0.10}
\definecolor{nB}{rgb}{0.00,0.62,0.62}
\definecolor{nC}{rgb}{0.15,0.62,0.18}
\definecolor{nD}{rgb}{0.78,0.10,0.74}
\definecolor{nE}{rgb}{0.12,0.30,0.85}
\definecolor{nF}{rgb}{0.95,0.55,0.00}
\newtheorem{theorem}{Theorem}[section]
\newtheorem{corollary}[theorem]{Corollary}
\newtheorem{lemma}[theorem]{Lemma}
\newtheorem{conjecture}[theorem]{Conjecture}
\newtheorem{question}[theorem]{Question}
\theoremstyle{definition}
\newtheorem{claim}[theorem]{Claim}
\newcommand{\rr}{\mathds{R}}
\DeclareMathOperator{\conv}{conv}
\DeclareMathOperator{\supp}{supp}
\title[-]{Bisecting masses with families of parallel hyperplanes}
\author[Hubard]{Alfredo Hubard}\address{University Gustave Eiffel Paris-Est} 
\email{alfredo.hubard@univ-eiffel.fr}
\author[Sober\'on]{Pablo Sober\'on}\address{Baruch College \& The Graduate Center, City University of New York, New York, NY 10010} 
\email{psoberon@gc.cuny.edu}
\subjclass{52C35, 52A37, 28A75}
\keywords{Mass partitions, Hyperplane arrangements, Necklace Splitting, Ham Sandwich theorem}
\thanks{The research of P. Sober\'on was supported by NSF CAREER award no. 2237324, NSF award no. 2054419 and a PSC-CUNY Trad B award.}
\begin{document}

\begin{abstract}
We prove a common generalization of several mass partition results using hyperplane arrangements to split $\rr^d$ into two sets.  Our main result implies the ham sandwich theorem, the necklace splitting theorem for two thieves, a theorem about chessboard splittings using hyperplanes with fixed directions, and all known cases of Langerman's conjecture about bisections with $n$ hyperplanes.

Our main result also confirms an infinite number of previously unknown cases of the following conjecture of Takahashi and Sober\'on: \emph{For any $d+k-1$
measures in $\rr^d$, there exist an arrangement of $k$ parallel hyperplanes that bisects each of the measures.}

The general result follows from the case of measures that are supported on a finite set with an odd number of points.  The proof for this case is inspired by ideas of differential and algebraic topology, but it is a completely elementary parity argument. 

Additionally, we disprove the general case of Langerman's conjecture, showing that the conditions in our main result are sometimes necessary.


\end{abstract}

\maketitle

\section{Introduction}

Understanding how one can split families of measures on real spaces into pieces of the same size is a central topic in topological combinatorics \cites{RoldanPensado2022, Zivaljevic2017}. The classic example, known as the ham sandwich theorem, states that \textit{given $d$ smooth probability measures on $\rr^d$, there exists a hyperplane simultaneously splitting each measure in half.}

Early popularity of this theorem was likely due to the elegance of its proof and maybe to the culinary reference. Before Stone and Tukey   called it the \emph{ham sandwich} theorem (as we slice a ham and two pieces of bread simultaneously) Steinhaus used the motivation of a \emph{leg of pork} (as we slice fat, bone, and meat).

A myriad of related results have appeared since then. Many of them recover the foodie references (cakes  \cite{cccg98-akiyama-perfect}, spicy chickens \cite{Karasev:2014gi}, pizzas \cite{Barba:2019to}, etc.), and most are proven with topological methods, which range from fixed point and Borsuk--Ulam type theorems to more advanced techniques in algebraic topology (cohomological index theory, equivariant obstruction theory, spectral sequences, etc).  Even splitting measures with families of few hyperplanes can lead to very rich topological statements (see, for instance, the history of the Hadwiger--Gr\"unbaum--Ramos theorem \cite{Blagojevic:2018jc}).

Beyond their inherent interest, these results have numerous applications in various fields like  computational geometry \cite{Matousek:1994kq}, combinatorial geometry \cites{alon2005crossing, Fox:2012ks} convexity \cites{lehec2009yao, FHMRA19}, geometric analysis \cites{Gromov:2003ga, Guth.2010}, harmonic analysis \cite{Guth:2016cy} and incidence geometry \cite{Guth:2015tu}. In these applications one may use partitions in which one part avoids a hyperplane \cite{yao1985general}, partitions in which continuous functions on convex bodies are equalized \cites{Gromov:2003ga,FHMRA19}, or partitions by the zero set of a polynomial \cite{Stone:1942hu}. The current paper extends this last type of result to polynomials that are products of affine functions satisfying certain constraints. As in the classical polynomial ham sandwich theorem, the Veronese trick combined with our theorem implies nonlinear analogues as corollaries.

Results which deal with partitions of $\rr^d$ into two sets separated by a possibly singular $(d-1)$-dimensional manifold  are known as ``chessboard colorings''. The most studied family of examples of chessboard colorings are the ones that have a hyperplane arrangement as boundary. Depending on the number of hyperplanes used and conditions on their position, there have been several important results in the area.  The backbone of the proof of each of these results is topological, most boiling down to the study of zeros of certain equivariant maps (the group involved changes from theorem to theorem).

\subsection{Our result}

The main result of this manuscript is a common generalization of several known results regarding chessboard colorings bounded by hyperplane arrangements.  A surprising aspect of the proof is that, rather than proving a common generalization to a scattered set of Borsuk--Ulam-type theorems, our elementary proof was inspired by one of the proofs of the Borsuk--Ulam theorem.  The main idea is to study the parity of a discrete set of partitions as we go through some geometric deformation of the measures.  The idea behind this approach, in the language of differential topology was used by Hubard and Karasev to confirm new cases of a conjecture of Langerman \cite{Hubard2020}. Later, Patrick Schnider managed to make the argument elementary in dimension two \cite{Schnider2021}.  We show how this elementary approach can be used in any dimension and involving families of parallel hyperplanes, which significantly increases the number of results we generalize.  Let us first describe the main theorem in full generality, and then specify how it implies several known results.

Given two positive integers $m,k$, let $S(m,k)$ be the \emph{Stirling number of the second kind}.  This counts the number of partitions of a set of $m$ elements into $k$ non-empty sets.  Given positive integers $m_1, \ldots, m_n$ and $M = m_1 + \dots + m_n$, let $\binom{M}{m_1,\dots, m_n}$ be the multinomial coefficient that counts the number of ordered partitions of a set of $M$ elements into $n$ sets $U_1, \dots, U_n$ such that $|U_i|=m_i$ for each $i$.

An oriented hyperplane $h$ in $\rr^d$ defines two closed halfspaces: $h^+$ and $h^-$, which we refer to respectively as the positive and negative halfspace of $h$.  A finite family $\mathcal{F}$ of oriented hyperplanes in $\rr^d$ defines a chessboard coloring, which is a pair of sets $(A,B)$ defined as:
\begin{align*}
A & = \{x \in \rr^d: x \mbox{ is in an even number of positive halfspaces of hyperplanes of }\mathcal{F}\}   \\
B & = \{x \in \rr^d: x \mbox{ is in an odd number of positive halfspaces of hyperplanes of }\mathcal{F}\}.
\end{align*}


Notice that since a hyperplane is the zero set of an affine function, $a\colon \rr^d \to \rr$, given a family of hyperplanes, with corresponding affine functions $\{a_1,a_2, \ldots a_M\}$ their chessboard coloring can alternatively be succinctly described by \[A=\{x \in \rr^d:p(x)\geq 0\} \textrm { and }B=\{x \in \rr^d:p(x)\leq 0\}, \textrm{ where, }\] 
\[p(x):=\prod_{i=1}^M a_i(x).\]

Given a hyperplane arrangement, with corresponding chessboard partition $(A,B)$, and a finite Borel measure $\mu$, we say that the arrangement {\bf bisects} the measure if 

\[
\min(\mu (A), \mu(B)) \geq \frac{\mu(\rr^d)}{2}. 
    \]

A central instance of this definition is when a finite measure $\mu$ is absolutely continuous with respect to the Lebesgue measure (or more generally, $\mu(h)=0$ for every affine hyperplane $h$), in this case the bisecting condition is simply:
\[\mu(A)=\mu(B).\]


   
 
\begin{theorem}\label{thm:main}
    Let $n,d$ be positive integers, and $L_1,L_2\dots, L_n$ be subspaces of $\rr^d$ of positive dimension. Let $l_i = \dim L_i$, and $k_1, \dots, k_n$ be positive integers. Let $G$ be the subgroup of permutations of $[n]=\{1,\dots,n\}$ such that for every $g \in G$, $L_{g(i)}=L_i$ and $k_{g(i)}=k_i$ for all $i$. Put $m_i:=l_i + k_i - 1$, $M: = \sum_{i=1}^n m_i$.
    Suppose that
    
    \[N:=\frac{1}{|G|}
    \binom{M}{m_1,m_2 \dots, m_n} \prod_{i=1}^n S(m_i, k_i) \equiv 1 \mod 2.
    \]
     Then, for any $M$ finite measures on $\rr^d$, 
     there exist unit vectors $v_1 \in L_1, v_2 \in L_2 \dots$, $v_n \in L_n$, and families of hyperplanes $\mathcal{F}_1,\dots, \mathcal{F}_n$ such that for each $i$, each hyperplane in $\mathcal F_i$ is orthogonal to $v_i$, $|\mathcal F_i|=k_i$, and the chessboard coloring induced by the family $\bigcup_{i=1}^n \mathcal{F}_i$ bisects each of the $M$ measures.
    
\end{theorem}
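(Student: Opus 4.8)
\section*{Proof proposal}

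The plan is to follow the two-step strategy hinted at in the introduction: first reduce, by approximation, to measures supported on finitely many points with an \emph{odd} number of atoms each (say of equal weight), and then run a $\bmod 2$ counting argument on a discrete set of ``balanced'' arrangements. For the reduction I would approximate each $\mu_j$ weakly by such a finitely supported measure, solve the problem for the approximations, and pass to the limit using compactness of the unit spheres of the $L_i$ together with compactness of the space of (bounded) hyperplane offsets; the defining inequality $\min(\mu(A),\mu(B))\ge \mu(\rr^d)/2$ is closed under these limits because mass that lands on the limiting arrangement is counted in \emph{both} $A$ and $B$, so it only helps, while one must still check that hyperplanes do not degenerate in a way that destroys bisection. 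The oddness hypothesis is exactly what makes the core argument run: a measure carried by an odd number of equal-weight atoms cannot be split into two halves with every atom off the arrangement, so bisecting it \emph{forces} an atom onto the arrangement, turning ``bisects $\mu_j$'' into a positive-codimension condition.

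For the finite case, let $\mathcal{C}$ be the space of admissible arrangements: tuples $(v_1,\dots,v_n;\mathcal{F}_1,\dots,\mathcal{F}_n)$ with $v_i$ a unit vector of $L_i$ and $\mathcal{F}_i$ a set of $k_i$ hyperplanes orthogonal to $v_i$, taken modulo the relabelling action of $G$. Counting degrees of freedom, $\dim \mathcal{C}=\sum_i(l_i-1)+\sum_i k_i=\sum_i m_i=M$. Fix $\mu_j$ carried by $2r_j+1$ equal-weight atoms in general position: away from non-generic configurations, the set of arrangements bisecting $\mu_j$ is a union of relatively open pieces of the codimension-$1$ ``walls'' $\{$some hyperplane of the arrangement passes through some atom of $\mu_j\}$, since on such a wall bisection is equivalent to the open balance condition that exactly $r_j$ of the remaining atoms of $\mu_j$ lie strictly on each side. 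Intersecting the $M$ conditions inside the $M$-dimensional $\mathcal{C}$ one expects, and for generic positions of all atoms can arrange, a finite set $\mathcal{Z}(\mu_1,\dots,\mu_M)$ of common bisecting arrangements; moreover at such an arrangement every hyperplane is pinned through an atom (otherwise its offset would be free and $\mathcal{Z}$ would be positive-dimensional), so $\mathcal{Z}$ stays in a compact subset of $\mathcal{C}$ as long as the atoms do.

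The heart of the argument is the claim that $|\mathcal{Z}(\mu_1,\dots,\mu_M)| \bmod 2$ does not depend on the (generic) positions of the atoms. Moving the atoms along a generic path, the finite set $\mathcal{Z}$ moves continuously and can change cardinality only when the path meets a codimension-$1$ locus of degenerate configurations (two atoms of the same measure forced onto the arrangement at once, an atom reaching a vertex of the arrangement, a direction $v_i$ becoming underdetermined, the balance count hitting its boundary, etc.), and at each such crossing the elements of $\mathcal{Z}$ should be created or destroyed in pairs. This is precisely the elementary, differential-topology-free incarnation of the $\bmod 2$ degree / transversality step in one of the standard proofs of Borsuk--Ulam, generalizing Schnider's planar treatment; the pinning observation and compactness are what forbid elements of $\mathcal{Z}$ from escaping to infinity, so that only these local events matter. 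I expect this parity invariance --- and in particular the bookkeeping of \emph{all} degeneration types in arbitrary dimension with several parallel families --- to be the main obstacle.

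Granting the invariance, it remains to evaluate the parity on a convenient model: let $\mu_j$ be a single unit atom at $p_j$, with $p_1,\dots,p_M$ in general position. Then bisecting $\mu_j$ just says $p_j$ lies on the arrangement, hence on a hyperplane orthogonal to some $v_i$; so a bisecting arrangement records a partition of $\{p_1,\dots,p_M\}$ into sets $U_1,\dots,U_n$ ($U_i$ ``assigned to direction $v_i$'') together with, for each $i$, a partition of $U_i$ into the $k_i$ nonempty groups lying on the $k_i$ hyperplanes of $\mathcal{F}_i$. For the arrangement to be isolated no hyperplane may be free, which forces the number $\sum_a(|P^i_a|-1)=|U_i|-k_i$ of linear constraints imposed on $v_i\in L_i$ to equal $l_i-1$, i.e. $|U_i|=m_i$; then $v_i$ is pinned to a line and the hyperplanes are determined. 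Thus there are $\binom{M}{m_1,\dots,m_n}$ choices for the assignment and $\prod_i S(m_i,k_i)$ for the groupings, and for generic $p_j$ the group $G$ acts freely on them (a nontrivial symmetry would force coincidences), so $|\mathcal{Z}|=\tfrac{1}{|G|}\binom{M}{m_1,\dots,m_n}\prod_i S(m_i,k_i)=N$. Since $N\equiv 1 \pmod 2$, $\mathcal{Z}\neq\emptyset$ for the model, hence by invariance for every generic finite configuration, hence by Step 1 for arbitrary finite measures.
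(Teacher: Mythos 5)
Your overall scheme matches the paper's (reduce to measures supported on odd finite point sets, prove a mod-$2$ invariance of the number of bisecting arrangements under generic deformation, and evaluate the count on a special configuration where it equals $N$), but the proposal has a genuine gap exactly at the step you yourself flag as ``the main obstacle'': the parity invariance is asserted, not proved. Saying that at a codimension-one degeneration ``elements of $\mathcal{Z}$ should be created or destroyed in pairs'' is not enough; the whole difficulty is to exhibit, for an arrangement that is about to gain or lose the bisecting property (an \emph{almost bisecting} arrangement, with two points of one color on the arrangement), the unique partner arrangement whose status flips in the complementary way. In the paper this partner is generally not a local perturbation of the original arrangement: it is produced by an explicit moving-hyperplane process (translate a hyperplane that carries a single point, possibly ``through infinity''; when the hyperplane carries several points, rotate the whole parallel family about the $(d-2)$-flats supplied by \cref{circulo}), iterated until one reaches a point whose color lies in the core of the oversaturated family, with a separate argument (no cycles along a one-dimensional reversible path) that the process terminates. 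Without this construction --- or some substitute for it --- the claim that changes occur in pairs, and hence the whole theorem, is unsupported; this is precisely the content of \cref{moving} and \cref{almost_bisecting}, which together give \cref{parity}.

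There is a second, more repairable, mismatch in your base case. The deformation argument only compares configurations with the \emph{same} number of atoms in each color class, so computing the count on the model where each $\mu_j$ is a single atom does not by itself give the parity for a generic configuration with $2r_j+1$ atoms per measure: you cannot connect the two by a path of point sets. The paper handles this by building, for arbitrary prescribed odd cardinalities, a well-separated family of point clouds each roughly symmetric about a point of a generic set $P$, and showing (via \cref{lem:conteo-chico} and \cref{lem:conteo-grande}) that for such a configuration the valid bisecting arrangements are exactly the $(l,k)$-valid partitions, hence $N$ of them up to the $G$-action. Your dimension count and the identification $|U_i|=m_i$, as well as the weak-limit step at the end (which is essentially the paper's final lemma, including the need to allow hyperplanes to escape to infinity), are fine; but as written the argument is incomplete at its central step and slightly off at the base case.
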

In one case this condition was known to be necessary for the theorem to hold.  The case is $n=2, l_1 = l_2 = 1, k_1 = k_2 = 1$, and a counterexample was constructed by Karasev et al. \cite{Karasev:2016cn}. In \cref{sec:counterexamples} we extend this known counterexample to many instances with $l_i=1$ for all $i$.  We also construct counterexamples to a conjecture of Langerman \cite{Barba:2019to}, described below, corresponding to the case $(k_i,l_i)=(1,d)$ for all $i \in [n]$.

\begin{question} Is the numerical condition of \cref{thm:main} necessary. 
\end{question}
 
We now make a few remarks to shed some light on this condition.  Firstly, the group $G$ is a direct product of symmetric groups. 
 A subset $I \subset \{1,2,\ldots n\}$ gives rise to a factor $S_{I}$, if and only if, for all $i,j \in I$, 
$L_i=L_j$ and $k_i=k_j$. 
Secondly, the multinomial coefficient $\binom{M}{m_1,m_2 \dots, m_n}$ is odd when the expressions of the numbers $\{m_1,m_2, \ldots m_n\}$ written in base two don't share any nonzero term 
(for instance if for each $i$, $m_i=2^i$, then the multinomial coefficient is odd, in contrast if e.g. $m_1=1,m_2=3$, then it is even, since in binary $m_1=01$ and $m_2=11$). 
Finally, it is known that:
\[
S(m, k)\equiv
\binom{m - \left\lceil \frac{k+1}{2}\right\rceil} 
{ \left \lfloor \frac{k-1}{2} \right \rfloor} \mod 2.
\]

Combining the previous remarks one can precisely understand in which cases $N$ is odd. Let us specify some instances of the theorem above that give interesting known results.  

\begin{itemize}
	\item The case $n=1, L_1 = \rr^d, k_1=1$ is the ham sandwich theorem \cite{Steinhaus1938}.  Since $S(d,1)=1$, we don't impose any additional conditions. 
 	\item The case $d=1$, $n=1$ and any $k$ is the ``necklace splitting theorem''.  This was originally proved by Hobby and Rice \cite{Hobby:1965bh} and then rediscovered in the 80's with two new proofs \cites{Goldberg:1985jr, Alon:1985cy}.  As $S(k,k)=1$, we don't impose any new requirement on $k$.
	\item The case $n=1, L_1 = \rr^d, k_1=2$ is a recent result of Sober\'on and Takahashi \cite{Soberon2021a}, which says that \textit{for any $d+1$ measures in $\rr^d$, there are two parallel hyperplanes whose chessboard coloring bisects each measure}. Notice that since $S(d+1,2)$ is always odd for $d \ge 1$, we don't impose additional conditions on $d$.
	\item The case $l_1 = \dots = l_n = 1$ is a result of Karasev, Rold\'an-Pensado, and Sober\'on \cite{Karasev:2016cn}, which describes chessboard colorings by families of hyperplanes with fixed directions.  The condition of parity of a multinomial coefficient is the same in the theorem above as the one found by Karasev et al.
	\item The case $L_1 = \dots = L_n = \rr^d$ and $k_1 = \dots = k_n=1$ is a recent result of Karasev and Hubard \cite{Hubard2020} which solved an infinite number of instances of a conjecture of Langerman \cite{Barba:2019to} which we recall below.  The condition of parity of the multinomial coefficient boils down to the dimension being a power of $2$, like in the aforementioned paper \cite{Hubard2020}.  Blagojevi\'c et al gave a different proof of the same statement \cite{Blagojevic2022}. 
\end{itemize}

Here is Langerman's conjecture.
\begin{conjecture}
For any $dn$ measures in $\rr^d$ there exist $n$ hyperplanes whose chessboard coloring bisects each of them. 
\end{conjecture}

We provide a counterexample to Langerman's conjecture for the case $(d,n)=(3,2)$.

\begin{theorem} There exists a family of six measures in $\mathbb R^3$ that cannot be bisected by two hyperplanes. 
\end{theorem}

The case $n=1, L_1 = \rr^d$ and any $k$ confirms an infinite number of cases of the conjecture below, due to Takahashi and Sober\'on \cite{Soberon2021a}. This is one of many generalizations of the necklace splitting theorem to higher dimensions, see \cites{Longueville2008, Blagojevic:2018gt} for other generalizations.

\begin{conjecture}\label{conj:takahashi}
For any $d+k-1$ measures in $\rr^d$, there exist $k$ parallel hyperplanes whose induced chessboard coloring simultaneously bisects every measure.    
\end{conjecture}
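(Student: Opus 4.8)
The plan is to first observe that the conjecture is exactly the specialization of \cref{thm:main} to $n=1$, $L_1=\rr^d$, $k_1=k$: here $l_1=d$, so $m_1=M=d+k-1$, the group $G$ is trivial, the multinomial coefficient $\binom{M}{m_1}$ equals $1$, and hence $N=S(d+k-1,k)$. Thus \cref{thm:main} already proves \cref{conj:takahashi} whenever $S(d+k-1,k)$ is odd. First I would record that the advertised special cases drop out immediately: $k=1$ gives $S(d,1)=1$ (the ham sandwich theorem), $k=2$ gives $S(d+1,2)=2^{d}-1$, odd for every $d\ge 1$ (the Sober\'on--Takahashi theorem), and $d=1$ gives $S(k,k)=1$ (necklace splitting). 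Then, combining the stated congruence $S(m,k)\equiv\binom{m-\lceil (k+1)/2\rceil}{\lfloor (k-1)/2\rfloor}\pmod 2$ with the standard base-$2$ (Lucas) criterion for binomial coefficients mod $2$, I would write out the infinite families of pairs $(d,k)$ for which the count is odd; this is the ``infinite number of previously unknown cases'' of the conjecture claimed in the abstract.

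For the pairs $(d,k)$ where $S(d+k-1,k)$ is even, the parity bookkeeping behind \cref{thm:main} stalls, and the plan is to rerun the same elementary deformation argument with a more carefully chosen reference configuration. The steps would be: (i) reduce the statement to the case of $d+k-1$ measures each supported on a finite set with an odd number of points, via weak-$*$ approximation and compactness (a bisecting arrangement persists under limits because the condition $\min(\mu(A),\mu(B))\ge \mu(\rr^d)/2$ is closed); this reduction is exactly the one the paper's own proof uses. (ii) For such a discrete target family, view the set of bisecting arrangements inside the configuration space of $k$ parallel hyperplanes — which fibres over $S^{d-1}$ (the common direction $v$) with fibre the standard $k$-sphere of $2$-coloured necklace cuts orthogonal to $v$ — and argue that after a small generic perturbation of the point masses this set is finite. (iii) Connect the target family to a fixed reference family of discrete measures through a path of discrete families, and show the parity of the number of bisecting arrangements is invariant along the path, because solutions are created or destroyed in pairs in the interior of configuration space while the degenerate events on the boundary (two hyperplanes colliding, a hyperplane escaping to infinity, or $v$ becoming nongeneric for one of the point sets) also cancel in pairs. (iv) Choose the reference family to have an \emph{odd} number of bisecting arrangements, which then forces the target family to have one, hence a nonempty set of solutions.

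The hard part will be step (iv): exhibiting, for every such $(d,k)$, a discrete reference family whose bisecting arrangements are finite and odd in number. The symmetric choices reproduce the count $S(d+k-1,k)$, which is precisely the obstruction we are trying to circumvent, so one must break symmetry deliberately — for instance placing the point masses on a moment-type curve, or in sufficiently generic convex position, so that the bisecting arrangements localize near combinatorially distinct ``necklace'' types whose total is odd. It is conceivable that no such configuration exists for some $(d,k)$, in which case the conjecture would be false; this is not idle worry, since Karasev, Rold\'an-Pensado, and Sober\'on \cite{Karasev:2016cn} produce a counterexample in the two-direction analogue, and \cref{sec:counterexamples} extends such counterexamples to instances with $l_i=1$ (none of which, however, touches the single-direction, full-dimensional case at hand). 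A secondary obstacle is the boundary analysis in step (iii): the known proofs of necklace splitting handle the ``cut at infinity'' by compactifying the space of cuts to a sphere, but controlling the combined boundary of direction space and cut space — and ruling out a net flux of solutions through it — becomes more delicate once the free direction $v$ is present. I would expect that for the even-count pairs this line of attack either succeeds through a genuinely new reference construction or turns up a counterexample, and that no purely elementary parity argument settles the conjecture in full without an additional idea.
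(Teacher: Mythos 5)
The statement you were asked about is stated in the paper as a conjecture, and the paper itself proves it only when $S(d+k-1,k)$ is odd, exactly as in your first paragraph: specialize \cref{thm:main} to $n=1$, $L_1=\rr^d$, $k_1=k$, so that $G$ is trivial, the multinomial coefficient is $1$, and $N=S(d+k-1,k)$ — so your treatment of these cases coincides with the paper's own derivation (including the checks $S(d,1)=1$, $S(d+1,2)=2^d-1$, $S(k,k)=1$). Your proposed program for the pairs with $S(d+k-1,k)$ even goes beyond anything the paper establishes and, as you yourself point out, is not a proof — step (iv), producing a reference configuration with an odd number of bisecting arrangements, is precisely the missing content — which is consistent with the fact that those cases of \cref{conj:takahashi} remain open.
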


 We confirm \cref{conj:takahashi} when $S(d+k-1, k)$ is odd. Our theorem provides for each $d$ an infinite set of positive integers $k$, and for each $k$ an infinite number of $d$s, for which the pair $(d,k)$ satisfies this conjecture. In fact, if we arrange the pairs $(d,k)$ for which \cref{thm:main} implies the Takahashi--Sober\'on conjecture in a triangular array we obtain a Sierpinski triangle pattern.  

\subsection{Veronese trick}

Since the regions are bounded by hyperplanes, we can replace them by sets defined as zeroes of a polynomial.  This is better illustrated with an example, as in the following corollary, in which we consider a line as a degenerate case of a circle with center at infinity.

\begin{corollary}
    Let $\mu_1, \dots, \mu_7$ be seven finite measures on $\rr^2$, each absolutely continuous with respect to the Lebesgue measure.  There exist two concentric circles, a line and a vertical line whose union induces a chessboard coloring that bisects each measure.
\end{corollary}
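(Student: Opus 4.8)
The plan is to apply \cref{thm:main} in $\rr^3$ to the pushforwards of $\mu_1,\dots,\mu_7$ under the paraboloid (Veronese) lift $\phi\colon\rr^2\to\rr^3$, $\phi(x_1,x_2)=(x_1,x_2,x_1^2+x_2^2)$, and then translate the hyperplanes produced in $\rr^3$ back into curves in $\rr^2$. The dictionary I would use is standard: a hyperplane $\{a_1y_1+a_2y_2+a_3y_3=b\}$ in $\rr^3$ pulls back under $\phi$ to $\{a_1x_1+a_2x_2+a_3(x_1^2+x_2^2)=b\}$, which is a circle when $a_3\ne 0$ and a line when $a_3=0$; two parallel hyperplanes (same normal, hence the same $a_3$ and the same $(a_1,a_2)$ up to scaling) pull back to two concentric circles, or, when $a_3=0$, to two parallel lines, i.e.\ a degenerate pair of ``concentric circles'' centered at infinity; and a hyperplane orthogonal to $e_1$, that is $\{y_1=b\}$, pulls back to the vertical line $\{x_1=b\}$. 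Since $\phi$ is injective, for any oriented arrangement $\mathcal F$ in $\rr^3$ with chessboard coloring $(A,B)$ the sets $(\phi^{-1}(A),\phi^{-1}(B))$ are exactly the chessboard coloring of the union of the pulled-back curves, and $(\phi_*\mu_j)(A)=\mu_j(\phi^{-1}(A))$, $(\phi_*\mu_j)(B)=\mu_j(\phi^{-1}(B))$.

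The key step is then choosing the data for \cref{thm:main} correctly: take $d=3$, $n=3$, and
\[
L_1=\rr^3,\ k_1=2;\qquad L_2=\operatorname{span}(e_1,e_2),\ k_2=1;\qquad L_3=\operatorname{span}(e_1),\ k_3=1 .
\]
Then $l_1=3,\,l_2=2,\,l_3=1$, so $m_1=4$, $m_2=2$, $m_3=1$, and $M=7$. The three subspaces are pairwise distinct, so $G$ is trivial and
\[
N=\binom{7}{4,2,1}\,S(4,2)\,S(2,1)\,S(1,1)=105\cdot 7\cdot 1\cdot 1=735\equiv 1\pmod 2 .
\]

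Now I would apply \cref{thm:main} to the finite measures $\phi_*\mu_1,\dots,\phi_*\mu_7$ on $\rr^3$: it produces unit vectors $v_1\in\rr^3$, $v_2\in\operatorname{span}(e_1,e_2)$, $v_3=\pm e_1$, a pair of parallel hyperplanes $\mathcal F_1$ orthogonal to $v_1$, a single hyperplane $\mathcal F_2$ orthogonal to $v_2$, and a single hyperplane $\mathcal F_3$ orthogonal to $v_3$, whose joint chessboard coloring bisects each $\phi_*\mu_j$. Pulling back by $\phi$ and applying the dictionary, $\mathcal F_1$ becomes two concentric circles (possibly two parallel lines, if $(v_1)_3=0$), $\mathcal F_2$ becomes a line (the constraint $v_2\perp e_3$ forces its $a_3$-coefficient to vanish), and $\mathcal F_3$ becomes a vertical line. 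Because each $\mu_j$ is absolutely continuous with respect to Lebesgue measure, $\phi_*\mu_j$ assigns zero mass to each of these four hyperplanes — their $\phi$-preimages are Lebesgue-null curves — so ``bisects'' is exact halving; transporting this back through $(\phi_*\mu_j)(A)=\mu_j(\phi^{-1}(A))$ shows the union of the two concentric circles, the line and the vertical line splits each $\mu_j$ into two equal halves.

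The one place I expect to spend a little extra care is guaranteeing that $\mathcal F_1$ really yields two \emph{circles}: a hyperplane of $\mathcal F_1$ with $(v_1)_3\ne 0$ could in principle pull back to a single point or to the empty set. This is a non-generic degeneracy, handled either by the same convention already used in the statement for lines (a degenerate conic still counts as admissible) or by a routine perturbation argument using the robustness of \cref{thm:main} under small perturbations of the measures. The substantive content of the proof is really just the choice of the $L_i$ — in particular $L_2$ must be the coordinate plane $\operatorname{span}(e_1,e_2)$ rather than all of $\rr^3$, precisely so that the second curve is forced to be a line — together with the verification that $N$ is odd.
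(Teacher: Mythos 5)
Your proposal is correct and takes essentially the same route as the paper: the same paraboloid lift $(x_1,x_2)\mapsto(x_1,x_2,x_1^2+x_2^2)$, the same choice $L_1=\rr^3,\ k_1=2$; $L_2=\operatorname{span}(e_1,e_2),\ k_2=1$; $L_3=\operatorname{span}(e_1),\ k_3=1$, and the same parity check that $N=\binom{7}{4,2,1}\,S(4,2)=735$ is odd. The additional care you take with the pullback dictionary, degenerate circles, and exact halving for absolutely continuous measures only makes explicit what the paper leaves implicit.
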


\begin{proof}
    Consider the map $(x,y) \to (x,y,x^2+y^2)$.  This lifts each of the measures to a regular paraboloid $P$ in $\rr^3$.  Moreover, if $H$ is a plane in $\rr^3$, the projection of $H \cap P$ onto the $xy$-plane is a circle.  If $H$ has a normal vector in the $xy$-plane, then $H \cap P$ projects down to a line.  Apply \cref{thm:main} to the lifted measures using $n=3$ and:
    \begin{itemize}
        \item $L_1 = \rr^3, k_1 = 2$ (these two parallel planes will give the two concentric circles)
        \item $L_2$ equal to the the $xy$-plane, $k_2 = 1$ (this will give us the line without conditions), and
        \item $L_3 = \operatorname{span}\{(1,0,0)\}$, $k_3 = 1$.
    \end{itemize}

    Since $\binom{7}{4,2,1}=105$, $S(4,2)$ is odd, and $S(2,1)=S(1,1)=1$, we meet the conditions of \cref{thm:main}.
\end{proof}

The idea of using this trick for ham sandwich type results is already implicit in the seminal paper \cite{Stone:1942hu} of Stone and Tukey.  We can combine the Veronese trick with careful choices for the spaces $L_1$ to get chessboard partitions induced by curves of seemingly very different polynomials.

\subsection{Notation}
\label{notation}

To shorten some statements in the rest of the paper we use the notation of \cref{thm:main} throughout. We also use the following notation and terms:
\begin{itemize}
       
    \item $k:=(k_1,\dots, k_n),l:=(l_1, \dots, l_n)$ are positive integer vectors with $n$ entries each (in a lemma we take $n=1$).

     \item $B_{L_1},\dots, B_{L_n}$ are basis of the orthogonal complements $L_1^\perp,\dots, L_n^\perp$

    \item $\mathcal H=\cup _{i=1}^n\mathcal H_i$, where for each $i$, $\mathcal H_i:=\{h_{i,1}, h_{i,2}\ldots h_{i,k_i}\}$ is an arrangement of $k_i$ parallel hyperplanes. 
    
    \item $\mu^-=\{\mu_1,\mu_2, \dots, \mu_M\}$ is a family measures on $\rr^d$.
\end{itemize}

\subsection{Summary}
The proof of \cref{thm:main} has three steps. First, we construct a particular family of measures supported in sets of an odd number of points for which the number of  bisecting arrangements is exactly $N$. 
Second, we show that for any deformation of these point sets the parity of the number of bisecting hyperplane arrangements is invariant. %

Finally, we show that the result for measures supported on an odd set of points implies the general result.

\section{The ham sandwich theorem revisited}

In this section we show how our method gives a new and elementary proof of the ham sandwich theorem.  This is a particular case of the general proof described in the next sections, but may help the reader build intuition about this approach.  Due to the equivalence of the Borsuk--Ulam theorem and the ham sandwich theorem \cite{Karthik2017}, this can be used as one of the ingredients of a new convoluted proof of the Borsuk--Ulam theorem.

First, we state the version of the ham sandwich we will prove.

\begin{theorem}
    If $U_1, \dots, U_d$ are $d$ sets of points in $\rr^d$, each of odd cardinality and such that their union is in general position.  Then, there exist points $p_1 \in U_1, \dots, p_d \in U_d$ such that the hyperplane spanned by $p_1,\dots, p_d$ halves each of the sets $U_1,\dots, U_d$.
\end{theorem}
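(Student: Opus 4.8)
The plan is to prove the equivalent statement that the number $B$ of \emph{bisecting transversal tuples} is odd, where a bisecting transversal tuple is a choice $(p_1,\dots,p_d)\in U_1\times\cdots\times U_d$ for which the hyperplane $\aff(p_1,\dots,p_d)$ halves each of $U_1,\dots,U_d$. This is equivalent to the statement above by a short observation: since every $|U_i|$ is odd, a hyperplane halving $U_i$ must contain at least one point of $U_i$ (otherwise the two open sides would have sizes differing by one); and a hyperplane halving all of $U_1,\dots,U_d$ cannot contain two points of the same $U_i$ without containing $d+1$ points of the union, contradicting general position. Hence it contains exactly one point of each $U_i$, these $d$ points are affinely independent (again by general position, which forbids $j+1$ points in a $(j-1)$-flat for $j\le d$), and the hyperplane is their affine span.

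\emph{A model with $B=1$.} I would first exhibit a configuration for which $B=1$. Put every point of $U_1\cup\cdots\cup U_d$ on the moment curve $t\mapsto(t,t^2,\dots,t^d)$, giving $U_i$ a block of parameters so that all parameters of $U_i$ are smaller than all parameters of $U_{i+1}$. Since a hyperplane meets the moment curve in at most $d$ points, any such configuration is automatically in general position, and for a transversal tuple the chosen parameters come in the order dictated by the blocks. Analysing the sign pattern of the degree-$d$ polynomial whose zero set is $\aff(p_1,\dots,p_d)$ shows that this hyperplane halves $U_i$ if and only if $p_i$ is the median parameter of the block of $U_i$ (the $d$ roots split $\R$ into alternating positive and negative intervals, and the points of $U_i$ other than $p_i$ fall into the two intervals neighbouring $p_i$). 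Hence the unique bisecting transversal tuple is the tuple of block medians, and $B=1$ for this configuration.

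\emph{Invariance of $B\bmod 2$.} The space of placements of $|U_1|+\cdots+|U_d|$ labelled points in $\R^d$ is a vector space, hence connected, so there is a path from the given configuration to the model above; after a generic perturbation the path avoids point collisions, never makes the $d$ points of a transversal tuple affinely dependent (a codimension-two event), and meets the wall $W=\{\text{some }d+1\text{ of the points are coplanar}\}$ only finitely often, transversally, each time with a single coplanar $(d+1)$-tuple. Off $W$ the configuration is in general position and $B$ is well defined and locally constant, so it suffices to show that $B\bmod 2$ is unchanged across a wall crossing. A transversal tuple $\tau$ can only change its bisecting status when some point $q$ crosses $\aff(\tau)$; at that moment $\aff(\tau)\cup\{q\}$ is exactly the coplanar $(d+1)$-tuple, which therefore consists of one point from each $U_j$ except for one index $i$, which contributes two points $p_i$ and $q$. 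Only the two transversal tuples that differ in whether they use $p_i$ or $q$ (keeping the common points of the other $d-1$ sets) are affected. Both of their hyperplanes lie in the pencil through the $(d-2)$-flat spanned by those $d-1$ common points, so the crossing is a one-parameter event in this pencil, and counting the points of $U_i$ on the two sides as $p_i$ and $q$ pass each other shows that the two tuples acquire or lose the bisecting property simultaneously. Thus $B$ changes by $0$ or $\pm2$; combined with the model, $B$ is always odd, and in particular positive, which is the theorem.

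\emph{The main obstacle.} The delicate part is this last step: identifying which wall crossings can affect $B$ (only those where one set contributes two points and every other set contributes exactly one), checking that precisely two transversal tuples are involved, and verifying that they flip in unison. Reducing this to the one-parameter pencil through the $(d-2)$-flat of common points and making the ``$p_i$ and $q$ pass each other'' count explicit, together with the routine genericity of the chosen path, is the heart of the argument; the reduction to transversal tuples and the $B=1$ model are comparatively easy.
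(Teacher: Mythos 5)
Your overall strategy is the same as the paper's: reduce to counting colorful $d$-tuples whose spanned hyperplane halves every $U_i$, exhibit one configuration where the count is $1$, and show the count mod $2$ is invariant along a generic path whose only degeneracies are isolated times with a single coplanar $(d+1)$-tuple. Your base case is a nice self-contained variant: the paper instead invokes the known fact that well-separated families admit exactly one halving hyperplane, while your moment-curve block configuration with the root/sign-alternation argument proves uniqueness directly (the block medians give the only bisecting tuple), which is a genuine small improvement in self-containedness.

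There is, however, one concrete misstatement at exactly the step you yourself identify as the heart of the argument. At a wall crossing the two affected tuples $\tau$ (using $p_i$) and $\sigma$ (using $q$) do \emph{not} in general ``acquire or lose the bisecting property simultaneously.'' Work in the pencil through the $(d-2)$-flat $F$ of the $d-1$ common points and project to the $2$-plane orthogonal to $F$: whether $q$ lies on the positive side of the hyperplane of $\tau$ and whether $p_i$ lies on the positive side of the hyperplane of $\sigma$ are linked with a sign that depends on whether $p_i$ and $q$ project to the same or to opposite sides of $F$ inside the common hyperplane at the critical time. In one of these cases the two tuples swap: $\tau$ loses the bisecting property while $\sigma$ gains it (net change $0$); in the other they flip together (net change $\pm 2$). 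The paper's local lemma is the correct weaker statement: at the critical time the swapped tuple $\sigma$ spans the same hyperplane as $\tau$, so it halves every color except $i$, which is almost balanced, and since $p_i$ lies on different sides of $\sigma$'s hyperplane just before and just after, $\sigma$ is bisecting at exactly one of the two nearby times; together with the status change of $\tau$ this gives change $0$ or $\pm 2$ in all cases. Your parity conclusion is therefore still correct, and the ``count points of $U_i$ on the two sides'' method you propose is the right one, but as written the claimed unison flip is false and should be replaced by this case analysis.
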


\begin{proof}
    We will show that the number of such halving hyperplanes is odd, so it cannot be zero. It is not difficult to show (see \cite{Barany:2008vv}) that if the sets $M_1, \dots, M_d$ are well separated (the convex hull of the union of any collection is  disjoint from the convex hull of the union of the rest), then there is exactly one halving hyperplane.  We consider each of $U_1, \dots, U_d$ as a different color.

 Suppose that we have another configuration of points $U'_1, U'_2, \dots, U'_d$ such that $|U_i|=|U'_i|$ for each $i$.  Consider a bijection between $U_i$ and $U'_i$ for each $i$.  We are going to continuously move the points of $(U_1, \dots, U_d)$ to their corresponding points in $(U'_1, \dots, U'_d)$.  For each point $p \in \bigcup_i U_i$, this gives us a function $p:[0,1] \to \rr^d$ such that $p(0) = p$ and $p(1)$ is the corresponding point in $\bigcup_i U'_i$.  Let $(U_1(t), \dots, U_d(t))$ be the induced configurations of points at time $t$.  We may assume without loss of generality that only for a finite number of exceptional times the configuration is not in general position, and when the configuration is not in general position, there is a unique $(d+1)$-tuple of points contained in a hyperplane.

    Each halving hyperplane at a time when $(U_1(t), \dots, U_d(t))$ is in general position goes through one point of each color.  For each colorful $d$-tuple $X(t)$ at time $t$, let $h_X(t)$ be the hyperplane they span.   Consider a colorful $d$-tuple $A(t)$, corresponding to a halving hyperplane.  If $A$ stops being a halving hyperplane at time $t_0$, it means that there is an $\varepsilon>0$ such that $h_A(t_0-\varepsilon)$ is a halving hyperplane, and $h_A(t_0+\varepsilon)$ is not.  Moreover, $h_A(t_0)$ must contain a point $p$ of the configuration that is not in $A$.  Suppose that the point is of color $i$, and let $B$ be the colorful $d$-tuple formed by replacing the point $p_i$ of color $i$ from $A$ by $p$.  Note that $h_B(t_0) = h_A(t_0)$, so it halves all colors except $i$, which is almost balanced.  Moreover, $p_i$ is on different sides of $h_B(t)$ for $t=t_0-\varepsilon$ and $t=t_0+\varepsilon$, so exactly at one of those times $h_B(t)$ is a halving hyperplane.

    If $h_B(t+\varepsilon)$ is a halving hyperplane, then the total number of halving hyperplanes did not change.  If $h_B(t-\varepsilon)$ is a halving hyperplane, then the total number of halving hyperplanes decreased exactly by two.  An analogous analysis shows that when a hyperplane starts being a halving hyperplane, then the total number of halving hyperplanes either stays constant or increases by two.  Therefore, the parity of halving hyperplanes does not change.  Since there is a configuration with exactly one halving hyperplane, we are done.
\end{proof}

 \begin{center}
 \begin{figure}\label{ham-sandwich}
    \begin{tikzpicture}
    
    \draw [blue, thick, dashed] (-1.5,-2) -- (3,2);
    \draw (1.5,2/3) node[red, thick]{\textbullet};
    \draw (1.9,2/3) node[red]{$r_1$};
    \draw (.75,0) node[red, thick]{$\circ$};
    \draw (1.10,0) node [red, right] {$r(t)$};
    \draw (.4,0) node[red]{$\circ$};
    \draw (1,1.5) node[red, thick]{\textbullet};
    \draw (-1.3,1.2) node[red, thick]{\textbullet};
    \draw (-.5,-.7) node[red, thick]{\textbullet};
    \draw (.7,-.4) node[red, thick]{\textbullet};
    \draw (-.4,-1.4) node[red, thick]{\textbullet};
    \draw (1.5,-1) node[red, thick]{\textbullet};
    \draw (2.3,.3) node[red, thick]{\textbullet};
    
    \draw (.5,0) edge [->] (.65,0);
     \draw (.85,0) edge [->] (1,0);
    \draw [brown, thick] (-1.1,-4/3)--(0,-2/3) -- (1.1,0)--(3.3,4/3);
    \draw (1.1,0) node[red]{$\circ$};
    \draw (0,-2/3) node[black, thick]{\textbullet};
    \draw (0.2,-2/3-0.1) node[black]{$b$};
    \draw (0,2/3) node[black, thick]{\textbullet};
    \draw (.2,-1) node[black, thick]{\textbullet};
   
    \end{tikzpicture}
\caption{Here $U(t)=U_1(t) \cup U_2(t) \subset \rr^d$, represented by red and black points. The red hollowed point $r(t)$ moves right along the arrowed line. It is represented at three different times. Before the movement the dashed blue line $h$ containing $r_1$ and $b$ bisects the point sets. When $r(t)$ is incident to $h$, the line $h'(t)$ spanned by $r(t)$ and $b$ is almost bisecting and coincides with $h$. When $r(t)$ passes to the right of $h$, the line $h'(t)$ becomes bisecting, and $h$ stops being bisecting.}
    \end{figure}
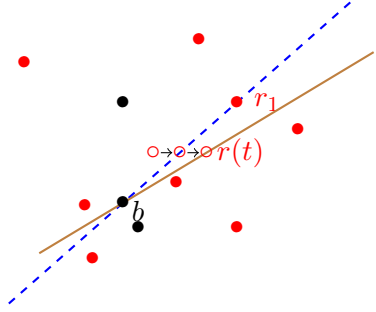
\end{center}

Once we involve families of parallel hyperplanes, there will be more nuance for the parity analysis.  The key ideas remain the same.

\section{Proof}
We will say that a measure is {\bf oddly supported} if it is the sum of an odd number of delta masses. Generic families of oddly supported measures are central to our approach, because in this case, one of the points of each measure lies in the hyperplane arrangement, and each of the open chessboard regions contains exactly half of the remaining points. In consequence, the number of bisecting arrangements for generic oddly supported measures is finite.  

\subsection{Points in generic position}

We say that a finite set of points $S$ in $\rr^d$ is in {\bf generic position} if the set of all $d|S|$ coordinates of the points is algebraically independent. 
In other words, the only multinomial with integer coefficients that has a zero if we evaluate it using only values of the $d|S|$ coordinates is the constant zero.  
In particular, if $S$ is in generic position any $d$ vectors formed by the differences of $d$ different pairs of points of $S$ that don't form any cycles is linearly independent.  
Notice that this condition is much stronger than general position, in which we require that no $d+1$ points of $S$ are contained in a hyperplane. 
Indeed, if we have $x_0, \dots, x_{d}$ points in a hyperplane, the $d$ vectors of the form $x_i - x_0$ would be linearly dependent.
It is easy to show that if $\{x_1,x_2, \ldots x_m\}$ is a point set and $\{\xi_1,\xi_2, \ldots \xi_m\}$ is a set of independent random vectors, chosen uniformly from the ball $B(0,\delta)$ of radius $\delta>0$, then $\{x_1+\xi_1,x_2+\xi_2, \ldots x_m+\xi_m\}$ is in generic position almost surely.

Whenever we have a partition of a set, its {\bf core} consists of all the subsets of the partition that have more than one element. In the following discussion, we intentionally use language loosely and conflate a partition of a point set with the corresponding partition of the finite set that indexes the point set.


\begin{lemma}\label{lem:conteo-chico}
Let $L$ be a subspace of $\rr^d$ of dimension $l$, $B_L$ a basis of $L^{\perp}$, and $P$ a set of $m=l-1+k$ points such that $P \cup B_{L}$ is in generic position in $\rr^d$.
The set of $k$-tuples of parallel hyperplanes whose union contains $P$ and whose normal vector is contained in $L$ is in bijection with the partitions of $P$ into $k$ non-empty sets.
\end{lemma}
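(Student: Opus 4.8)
The plan is to set up an explicit bijection between $k$-tuples of parallel hyperplanes (with normal direction in $L$) whose union contains $P$, and partitions of $P$ into $k$ non-empty blocks. First I would reduce to a linear-algebra statement. Since all hyperplanes in the arrangement are parallel with normal vector $v \in L$, a point $x$ lies on such a hyperplane $h$ exactly when $\langle v, x \rangle$ equals the constant defining $h$. So an unordered $k$-tuple of parallel hyperplanes orthogonal to $v$ is determined by the set of $k$ distinct values $\{c_1, \dots, c_k\} \subset \rr$, and "the union contains $P$" says precisely that the function $p \mapsto \langle v, p \rangle$ on $P$ takes values only in $\{c_1,\dots,c_k\}$. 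Grouping points of $P$ by which $c_j$ they map to gives a partition of $P$ into at most $k$ non-empty blocks; requiring it to be exactly $k$ blocks means each value $c_j$ is attained. Conversely, a partition of $P$ into $k$ non-empty blocks, together with a choice of $v$ and $k$ distinct values assigned to the blocks, determines the arrangement. So the content of the lemma is that, given the partition, the direction $v \in L$ (up to scaling and sign) and hence the values are \emph{uniquely determined} — there is no freedom — so the map from arrangements to partitions is a genuine bijection, not merely a surjection.

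The key step is therefore to show that for each partition $\pi$ of $P$ into $k$ non-empty blocks $Q_1, \dots, Q_k$, there is exactly one line of directions $\rr v \subset L$ such that $p \mapsto \langle v, p \rangle$ is constant on each block. Pick a representative point $q_j \in Q_j$ in each block. The condition that $\langle v, \cdot\rangle$ is constant on block $Q_j$ is the system of linear equations $\langle v, p - q_j\rangle = 0$ for all $p \in Q_j$, $j = 1, \dots, k$; this is a total of $|P| - k = m - k = l - 1$ linear equations on $v$. Together with the $d - l$ equations $\langle v, b \rangle = 0$ for $b$ ranging over the basis $B_L$ of $L^\perp$ (which is exactly the constraint $v \in L$), we get $d - 1$ linear homogeneous equations on $v \in \rr^d$. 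The generic position hypothesis on $P \cup B_L$ is exactly what guarantees these $d-1$ vectors — the $l-1$ difference vectors $p - q_j$ (which form no cycle, since within each block we always subtract a fixed representative) together with the $d-l$ vectors of $B_L$ — are linearly independent. Hence the solution space is exactly one-dimensional, giving a unique direction $\rr v$; and genericity also ensures $v$ is not orthogonal to any difference $q_i - q_j$ between representatives of distinct blocks, so the $k$ values $\langle v, q_1\rangle, \dots, \langle v, q_k\rangle$ are genuinely distinct and the resulting arrangement really has $k$ distinct hyperplanes realizing the partition $\pi$ and no coarser one. This gives a well-defined inverse map.

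I would then close the loop by checking the two composites are identities, which is immediate from the constructions: starting from an arrangement, the extracted partition's unique direction must be (a scalar multiple of) the arrangement's normal vector, since that normal vector already solves the defining linear system and the solution is unique. The main obstacle — really the only non-formal point — is making the linear-independence argument precise: one must check that the difference vectors $\{p - q_j : p \in Q_j \setminus \{q_j\}, \ j \in [k]\}$ together with $B_L$ form an "acyclic" family in the sense used in the definition of generic position in the excerpt, so that generic position forces their independence. This is a routine combinatorial observation (choosing one representative per block and subtracting it creates a forest, not a cycle), but it is the crux of why the hypothesis is stated the way it is, and I would present it carefully. Everything else (the dimension count $|P| - k + \dim L^\perp = (l-1+k) - k + (d - l) = d - 1$, and distinctness of the $k$ values) is bookkeeping.
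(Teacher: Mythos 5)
Your overall strategy is the same as the paper's: reduce to the linear system ``$\langle v, p-q_j\rangle=0$ for within-block differences, $\langle v,b\rangle=0$ for $b\in B_L$'', count $(l-1)+(d-l)=d-1$ equations, and use genericity of the acyclic family of difference vectors together with $B_L$ to get a unique normal direction and hence a unique arrangement for each partition. However, there is a genuine gap on the forward direction of the bijection: you never rule out arrangements in the stated set in which some hyperplane contains no point of $P$. You observe that grouping points by their value $\langle v,p\rangle$ gives ``at most $k$'' non-empty blocks, but the lemma is about \emph{all} $k$-tuples of parallel hyperplanes with normal in $L$ whose union contains $P$; if one hyperplane could miss $P$, the induced partition would have fewer than $k$ blocks, the forward map would not land in the claimed codomain, and (since an empty hyperplane can be translated freely) the set of arrangements would be infinite, so no bijection with partitions into $k$ non-empty sets could exist. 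Your closing argument that the two composites are identities silently assumes the extracted partition already has $k$ blocks, so this case is never addressed.

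The missing step is exactly the first half of the paper's proof, and it is of the same flavor as the independence argument you do flag: if only $r\le k-1$ of the hyperplanes meet $P$, the within-block differences number $m-r\ge l$, and together with the $d-l$ vectors of $B_L$ one gets at least $d$ vectors all lying in the $(d-1)$-dimensional space $v^{\perp}$; they are therefore dependent, yet they form an acyclic difference family together with $B_L$, contradicting generic position. Note that the same mechanism (not bare linear independence, but independence contradicting containment in $v^{\perp}$) is what justifies your asserted-but-unproved claim that the $k$ values $\langle v,q_1\rangle,\dots,\langle v,q_k\rangle$ are distinct: if $v\perp(q_i-q_j)$ for representatives of distinct blocks, adding this one cross-block difference to your $d-1$ vectors again puts $d$ acyclic vectors inside $v^{\perp}$. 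With these two dimension-count contradictions spelled out, your proof matches the paper's.
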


\begin{proof}

Suppose that $h_1, \dots, h_k$ are parallel hyperplanes whose union contains $P$ and whose normal vector $v$ is contained in $L$. We claim that none of the sets of the partition
$\chi=(h_1 \cap P, \dots, h_k \cap P)$ is empty. Let $r$ be the number of non-empty sets in this partition. For each $i \leq k$ such that $h_i \cap P$ is in $\chi$ choose some element $p_i \in h_i \cap P$. Put $k_i:=|h_i \cap P|$. 
Let $V$ be the union $\cup_i V_i$, where $V_i$ is the set of vectors of the form $q-p_i$ with $q \in h_i \cap (P\setminus \{p_i\})$. Then 
\[|V|=\sum_{j=1}^r{(k_j-1)} = (l+k-1)-r.\]

If $r \le k-1$, this means that we have constructed a set $V$ of at least $l$ vector differences, contained in $v^{\perp}$. Since $B_L$ contains $d-l$ vectors, and $B_L \cup V \subset v^{\perp}$, then the set $B_L \cup V$ is linearly dependent, contradicting the generic position assumption. Hence $r\geq k$, showing that each hyperplane contains at least one point. 

Now, consider a partition $\chi$ of $P$ into $k$ non-empty sets $P_1, \dots, P_k$.  For any $P_i$, consider a point $p_i \in P_i$, and then take the $|P_i|-1$ vectors $q-p_i$ for $q \in P_i\setminus \{p_i\}$.  
We have formed a total of $\sum_{i=1}^k (|P_i|-1)= l-1$ vectors.  
These vectors together with $B_L$ are linearly independent, so they form a basis of a hyperplane $h$.  
Then, consider $h_i$ the translate of $h$ through $p_i$. 
By construction, $P_i \subset h_i \cap P$.  By the arguments above, it is not possible to cover $P$ with fewer than $k$ translates of $h$, so $P_i = h_i \cap P$.  This means that for every partition of $P$ into $k$ non-empty subsets, there exists a $k$-tuple of translates of a hyperplane that induces that partition with perpendicular vector in $L$. 

In fact the two functions we described from partitions to arrangements, and from arrangements to partitions are inverses of each other.
\end{proof}

Before generalizing this lemma to arrangements with several sub-arrangements of parallel hyperplanes, we prove a similar lemma that will be used in \cref{deformation}.

\begin{lemma}\label{circulo}
Let $L$ be a subspace of $\rr^d$ of dimension $l$, $B_L$ a basis of $L^{\perp}$, and $P$ a set of $m=l-2+k$ points such that $P \cup B_{L}$ is in generic position in $\rr^d$. Let $X$ be a fixed partition of $P$ into $k$ non-empty subsets. There is a unique subspace $K$ of dimension $d-2$ such that there exists a vector $v \in L$ orthogonal to $K$ and there are $k$ translates of $K$ that cover $P$ inducing $X$ as a partition of $P$.
\end{lemma}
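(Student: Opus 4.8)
The plan is to reduce this to Lemma~\ref{lem:conteo-chico} by an appropriate dimension-count. Recall that in Lemma~\ref{lem:conteo-chico} a set of $l-1+k$ generic points together with the partition $X$ into $k$ nonempty parts pinned down a unique family of $k$ parallel hyperplanes (codimension~$1$, normal in $L$) inducing $X$. Here we have one fewer point, $m=l-2+k$, and we want a unique codimension-$2$ subspace $K$ with normal directions including a vector in $L$, together with $k$ translates covering $P$ and inducing $X$. So the intuition is: with one point fewer, we lose exactly one dimension of constraint, so the ``hyperplane'' becomes a ``codimension-$2$ flat'' and the claim is that it is still uniquely determined.

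First I would set up the vector-difference picture exactly as in Lemma~\ref{lem:conteo-chico}. For the fixed partition $X=(P_1,\dots,P_k)$, pick a representative $p_i\in P_i$ in each part and form the vectors $q-p_i$ for $q\in P_i\setminus\{p_i\}$; this gives $\sum_i(|P_i|-1)=m-k=l-2$ vectors. Call their span $W$; by generic position, $\dim W=l-2$ and $W$ is linearly independent from $B_L$, so $W\oplus\operatorname{span}(B_L)$ has dimension $(l-2)+(d-l)=d-2$. Set $K := W\oplus\operatorname{span}(B_L)$ — this is our candidate codimension-$2$ subspace. By construction $K^\perp$ has dimension $2$; I would next check that $K^\perp\cap L$ has dimension at least $1$ (indeed $L^\perp\subset K$, so $K^\perp\subset L$, giving a $2$-dimensional space of admissible normals, in particular a nonzero $v\in L$ orthogonal to $K$). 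Then the translate $K+p_i$ contains $P_i$ for each $i$, so the $k$ translates $K+p_1,\dots,K+p_k$ cover $P$ and induce a partition refining $X$; and a generic-position argument identical to the one in Lemma~\ref{lem:conteo-chico} (at most $l-2+k$ points can lie in $\le k$ translates of a $(d-2)$-flat whose direction space together with $B_L$ stays independent) shows this induced partition is exactly $X$, not a coarser one. This establishes existence.

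For uniqueness, suppose $K'$ is another $(d-2)$-dimensional subspace with some $v'\in L$ orthogonal to $K'$, admitting $k$ translates that cover $P$ and induce $X$. Then each part $P_i$ lies in a single translate of $K'$, so all the difference vectors $q-p_i$ ($q\in P_i$) lie in the linear space $K'$; hence $W\subseteq K'$. Also $v'\in L$ means $K'\supseteq (v')^\perp\cap\dots$ — more precisely I would argue that a codimension-$2$ subspace whose normal space meets $L$ and which contains $W$ must contain $L^\perp$: since $\dim W = l-2$ and $W\cap L^\perp=\{0\}$ (generic position), and since $\dim K'=d-2$, we get $K'= W\oplus (\text{something of dimension }d-l)$, and the constraint that $(K')^\perp\cap L\neq\{0\}$ forces that complement to be $\operatorname{span}(B_L)$ — otherwise $(K')^\perp$ would not meet $L$, again by generic position of $P\cup B_L$. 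Hence $K'=W\oplus\operatorname{span}(B_L)=K$.

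The main obstacle I anticipate is the uniqueness step, specifically pinning down why the direction space of $K'$ must contain all of $L^\perp$: this is where the full strength of the generic-position hypothesis on $P\cup B_L$ is needed, and the argument must be phrased carefully so that it uses only linear (in)dependence of the finitely many vectors actually involved (the $l-2$ differences, the $d-l$ vectors of $B_L$, and a putative extra direction in $(K')^\perp$), rather than a vague appeal to genericity. A secondary point to handle cleanly is the ``induced partition equals $X$ rather than a coarsening'' claim — one must rule out that two representatives $p_i,p_j$ accidentally land in the same translate of $K$, which again follows from the same counting inequality used in Lemma~\ref{lem:conteo-chico}.
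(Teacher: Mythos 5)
Your existence half is exactly the paper's argument: take the $l-2$ within-part difference vectors together with $B_L$, let $K$ be their span, and pass the $k$ translates through the chosen representatives; the published proof of \cref{circulo} consists of precisely this construction (it treats uniqueness as implicit, saying only that the argument is analogous to \cref{lem:conteo-chico}).

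The gap is in your uniqueness step, and it is the very point you flagged as the anticipated obstacle. From ``each part lies in a single translate of $K'$'' you correctly get $W\subseteq K'$, but the further claim that the existence of a single nonzero $v'\in L\cap(K')^{\perp}$ forces the complementary $(d-l)$-dimensional summand of $K'$ to be $\operatorname{span}(B_L)$ is false, and no amount of genericity of $P\cup B_L$ repairs it: whenever $l\le d-1$ the condition $(K')^{\perp}\cap L\neq\{0\}$ is far too weak. Concretely, take $d=3$, $l=2$, $k=2$, so $P=\{p_1,p_2\}$ and $W=\{0\}$: every line $K'$ through the origin with $p_2-p_1\notin K'$ admits a nonzero $v'\in L$ orthogonal to it (any two planes through the origin in $\rr^3$ meet in a line), and its translates through $p_1,p_2$ cover $P$ and induce $X$. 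So the literal uniqueness assertion has infinitely many witnesses, and your argument cannot be completed as sketched. The uniqueness that is actually needed later, in the rotation step of \cref{moving}, is uniqueness among subspaces $K$ with $K^{\perp}\subseteq L$ (equivalently $L^{\perp}\subseteq K$); this is what guarantees a full two-dimensional space of admissible normals inside $L$ so the rotating hyperplanes stay valid. Under that reading your argument does close: $W\subseteq K'$ and $\operatorname{span}(B_L)\subseteq K'$ give $W+\operatorname{span}(B_L)\subseteq K'$, and by genericity this sum already has dimension $(l-2)+(d-l)=d-2$, hence $K'=K$. So the correct fix is to strengthen (or reinterpret) the orthogonality hypothesis on $K$, not to try to derive $L^{\perp}\subseteq K'$ from the existence of one normal vector in $L$.
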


\begin{proof}
    The proof is analogous to the one for \cref{lem:conteo-chico}.  Let $X = X_1 \cup \dots \cup X_k$.  Pick a point $p_i \in X_i$ for each $i =1,\dots, k$, and consider the $l-2$ vectors of the form $p - p_i$ for $p \in X_i \setminus \{p_i\}$.  These, together with $B_L$, form $d-2$ linearly independent vectors, which gives us the basis for $K$.  The $k$ translates of $K$ containing each of $p_1, \dots, p_k$ are the affine spaces we were looking for.
\end{proof}

Let $l$ and $k$ be two positive integer vectors of length $n$. A {\bf valid $(l,k)$-partition} of a finite set $[M]$, consists of a labeled partition of $[M]$ into $n$  subsets, each of which is refined to an unlabeled partition of non-empty subsets. 
The $i$-th set of the first partition must have cardinality $l_i-1+k_i$, which is refined into $k_i$ non-empty subsets in the second partition. \\

For example if $M=7$, and $l=(2,2)$, $k=(3,2)$, then a valid $(l,k)$ partition of the set $[7]=\{1,2,3,4,5,6,7\}$, can be represented by
\[14|3|7 || 25|6,\]

where the first ordered partition is 
$1347|256$ and the subset $1347$ is partitioned to $14|3|7$, while the set $256$ to $25|6$. As a valid partition, it is equivalent to  
\[3|41|7||6|52,\]

since the order of the refined partition is irrelevant, but different from,

\[25|6||14|3|7.\]
since the order of the first partition is relevant. 

Given a point set $P$ in $\rr^d$ with cardinality $M$, we say that a hyperplane arrangement $\mathcal H:=\mathcal H_1 \cup \mathcal H_2 \cup \ldots \cup \mathcal H_n$ is {\bf $(\mathcal L,k,P)$-valid}, if each hyperplane contains at least one point of $P$, $\mathcal{H}_i$ consists of $k_i$ parallel hyperplanes all of which are perpendicular to some $v_i \in L_i$.  We also say that two such arrangements $\mathcal H:=\mathcal H_1 \cup \mathcal H_2 \cup \ldots \cup \mathcal H_n$ and $\mathcal H':=\mathcal H'_1 \cup \mathcal H'_2 \cup \ldots \cup \mathcal{H}'_n$ are equivalent if there is a permutation $g:[n]\to[n]$ such that $\mathcal{H}_i' = \mathcal{H}_{g(i)}$, $L_{g(i)}= L_i$, and $k_{g(i)}=k_i$ for all $i \in [n]$.  

\begin{lemma} \label{lem:conteo-grande}  Let $n,d$ be positive integers, and $L_1,L_2\dots, L_n$ be subspaces of $\rr^d$ of positive dimension. Let $l_i = \dim L_i$, and $k_1, \dots, k_n$ be positive integers. Let $G$ be the subgroup of permutations of $[n]=\{1,\dots,n\}$ such that for every $g \in G$, $L_{g(i)}=L_i$ and $k_{g(i)}=k_i$ for all $i$. Put $m_i:=l_i + k_i - 1$, $M: = \sum_{i=1}^n m_i$.
Let $P:=\{p_1,p_2\ldots p_M\}$ be a set of points, such that $P \cup B_{L_i}$ is generic for $i=1,\dots,n$.  For every $(l,k)$-valid partition $\chi$ of $P$, there exists a unique $(\mathcal L,k,P)$-valid hyperplane arrangement $\mathcal F=\mathcal{F}_1 \cup \dots \cup \mathcal{F}_n$ such that the induced refined partition $\{h \cap P: h \in \mathcal F\}$ is the valid partition $\chi$.  Additionally, the number of equivalence classes of $(\mathcal L, k, P)$-valid arrangements is $N$.
\end{lemma}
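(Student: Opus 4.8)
The plan is to prove the two assertions by running \cref{lem:conteo-chico} once on each of the $n$ ``colour classes'' and then performing an elementary count.

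\textbf{The bijection (first assertion).} Given an $(l,k)$-valid partition $\chi$ of $P$, write its top-level labelled partition as $[M] = U_1 \sqcup \dots \sqcup U_n$ with $|U_i| = m_i = l_i+k_i-1$, and let $\chi_i$ be the unlabelled refinement of $U_i$ into $k_i$ non-empty blocks. Put $P_i := \{p_j : j \in U_i\}$, so $|P_i| = m_i$. Since $P \cup B_{L_i}$ is in generic position, so is any of its subsets, in particular $P_i \cup B_{L_i}$; hence \cref{lem:conteo-chico} applies and produces a unique $k_i$-tuple $\mathcal F_i$ of parallel hyperplanes, with common normal vector in $L_i$, whose union contains $P_i$ and which induces the partition $\chi_i$ on $P_i$. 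Set $\mathcal F := \mathcal F_1 \cup \dots \cup \mathcal F_n$. By construction each hyperplane of $\mathcal F$ contains a point of $P$ and $\mathcal F_i$ consists of $k_i$ parallel hyperplanes orthogonal to a vector of $L_i$, so $\mathcal F$ is $(\mathcal L,k,P)$-valid. It remains to check that $\{h \cap P : h \in \mathcal F\}$ equals $\chi$ and not a finer partition: a hyperplane $h \in \mathcal F_i$ is a translate, through a point of $P_i$, of the linear hyperplane spanned by $B_{L_i}$ together with $l_i-1$ difference vectors of $P_i$, and the statement that a point $p_j$ with $j \notin U_i$ lies on $h$ is the vanishing of a polynomial in the coordinates of $P \cup B_{L_i}$ that is not identically zero; by genericity it does not vanish, so $h \cap P = h \cap P_i$ and the induced partition is exactly $\chi$. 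Uniqueness is immediate: any $(\mathcal L,k,P)$-valid $\mathcal F' = \mathcal F'_1 \cup \dots \cup \mathcal F'_n$ inducing $\chi$ must have $\mathcal F'_i$ covering precisely $P_i$ and inducing $\chi_i$, and \cref{lem:conteo-chico} forces $\mathcal F'_i = \mathcal F_i$.

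\textbf{The count (second assertion).} By the first part, $\chi \mapsto \mathcal F(\chi)$ identifies $(l,k)$-valid partitions of $P$ with $(\mathcal L,k,P)$-valid arrangements, compatibly with the two equivalence relations (relabelling the factors indexed by a symmetry $g \in G$). Moreover every $(\mathcal L, k, P)$-valid arrangement arises this way: a genericity argument using the linear-algebraic inequality $|Q_i| \le l_i + k_i - 1$ from the proof of \cref{lem:conteo-chico}, applied to the set $Q_i$ of points of $P$ lying on the hyperplanes of $\mathcal H_i$, shows that every point of $P$ lies on exactly one hyperplane and that $\mathcal H_i$ covers exactly $m_i$ of them in $k_i$ non-empty classes, so $\{h \cap P : h \in \mathcal H\}$ is an $(l,k)$-valid partition. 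Hence the number in question equals the number of $G$-orbits of $(l,k)$-valid partitions of $[M]$. To count the latter: there are $\binom{M}{m_1,\dots,m_n}$ ordered partitions of $[M]$ into blocks of sizes $m_1,\dots,m_n$, and, independently, $\prod_{i=1}^n S(m_i,k_i)$ ways to refine the $i$-th block into $k_i$ non-empty unlabelled parts, for a total of $\binom{M}{m_1,\dots,m_n}\prod_{i=1}^n S(m_i,k_i)$. Finally $G$ acts on this set by permuting the labelled blocks whose pairs $(L_i,k_i)$ agree, and this action is free: the blocks $U_1,\dots,U_n$ are pairwise distinct non-empty subsets of $[M]$ (each $m_i \ge 1$), so a non-identity $g \in G$ genuinely changes the partition. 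Therefore the number of orbits is $\tfrac1{|G|}\binom{M}{m_1,\dots,m_n}\prod_{i=1}^n S(m_i,k_i) = N$.

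\textbf{Main obstacle.} The only non-mechanical point is the genericity bookkeeping: checking that the arrangement built from $\chi$ carries no accidental incidences, so that it induces exactly $\chi$, and that an arbitrary $(\mathcal L,k,P)$-valid arrangement genuinely comes from an $(l,k)$-valid partition, i.e. covers $P$ with each point used once and with the correct block sizes. Both rest on the same algebraic-independence argument already used in \cref{lem:conteo-chico}; everything else is a direct appeal to that lemma and an elementary count together with the freeness of the $G$-action.
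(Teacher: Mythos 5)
Your proof is correct and takes essentially the same route as the paper: reduce to \cref{lem:conteo-chico} applied to each parallel family, then count ordered partitions of $[M]$ into blocks of sizes $m_1,\dots,m_n$ refined by Stirling-number partitions, and divide by $|G|$. You are in fact slightly more explicit than the paper about the genericity check ruling out accidental incidences and about the freeness of the $G$-action, but the substance is identical.
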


\begin{proof}
    By \cref{lem:conteo-chico}, if a hyperplane arrangement is $(\mathcal L,k,P)$-valid, the condition on each $\mathcal H_i$ implies that $|P \cap \mathcal H_i| = l_i + k_i-1$ for all $i$.  
    So $\mathcal H$ corresponds to an $(l,k)$-valid partition of $P$: we must first find an ordered partition of $P$ into sets of size $l_1+k_1-1, \dots, l_n +k_n-1$ to determine the sets $\mathcal H_i \cap P$.  Then, to find the hyperplanes of $\mathcal H_i \cap P$ we must find a partition of $\mathcal H_i$ into $k_i$ non-empty parts.  

    Notice that for each maximal subset of indices $I$ such that for all $i,j \in I$, $L_i=L_j$ and $k_i=k_j$, we are over-counting the corresponding arrangements $|I|!$ times.  The number of equivalence classes follows directly.

\end{proof}

In what follows, $l,k$, $\mathcal {L}$ and $P$ are always fixed in advance, so to unclutter notation we just talk about {\bf valid partitions}, and {\bf valid arrangements} but we insist that we are actually talking about a partition and a refinement of it, on the one hand, and about a hyperplane arrangement with certain constraints on their directions coming from $\mathcal L$ and $k$, on the other.

\subsection{Well separated families of small segments}

Now we construct a particular family of generic oddly supported measures.  Suppose we are given the family $\mathcal{L}$ of subspaces and the vectors $k,l$. We say that a family of sets $\{K_1, K_2, \ldots K_M\}$ in $\rr^d$ for $M>d$ is {\bf well separated} if for each $i=1,\dots, n$ a set of $k_i$ parallel hyperplanes, whose normal vector is in $L_i$, can intersect at most $k_i + l_i - 1$ sets of the form $\conv(K_j)$.

\begin{lemma}\label{lem:conteo-arreglado} Let $\mathcal L$ be an ordered set of $n$ sub-spaces of $\rr^d$, each of positive dimension. Put $l_i=\dim(L_i)$, $(k_1, k_2 , \ldots k_n) \in \mathbb N^n$.  For each $j \in [M]$, let $q_j$ be an odd positive integer. There exists $\mathcal U$ a well separated $M$-colored point set, such that for each $j \in [M]$, $U_j$ has $q_j$ points, for each $i \in [n]$, $(\cup_j U_j) \cup  (\cup B_{L_i})$ is generic and such that the set of labeled valid arrangements that bisect each $U_j$ is in bijection with the set of $(l,k)$-valid partitions of $[M]$.
\end{lemma}

\begin{proof}
Let $P$ be a set of $M$ points such that for each $i$, $P \cup B_{L_i}$ is in generic position.

Consider the set $S$ of  $(\mathcal{L}, k, P)$-valid hyperplane arrangements.  For $p_j \in P$, let $\ell_j \in S^{d-1}$ be a direction that is not contained in any hyperplane of $S$.  Due to \cref{lem:conteo-grande}, $S$ is finite, so we can choose the directions $\ell_1,\dots, \ell_M$ that are in general position.

Now, for $U_j$ take $q_j$ points in a segment with direction $\ell_j$ such that $p_j$ is the midpoint.  We can take this segment small enough so that for any hyperplane arrangement in $S$, exactly one hyperplane intersects it.

Now, for any choice $P'=(p'_1,\dots,p'_M)$ with $p'_j \in U_j$ for all $j \in [M]$, consider a valid $(\mathcal{L}, k, P')$-arrangement.  Since each $U_j$ is small enough, we can assume that only one hyperplane in it cuts the segment $U_j$.  Therefore, to bisect $U_j$ we need $p'_j = p_j$.  This means that the only valid hyperplane arrangements that bisect each $U_j$ are precisely the $(\mathcal{L},k,P)$-valid hyperplane arrangements, as desired.

A small perturbation that puts the entire set $U \cup (\cup_i B_{L_i})$ in generic position preserves this count.

\end{proof}

In particular, the number of equivalence classes of valid arrangements that bisect each set of points is $N$.

\subsection{Parity under deformation}\label{deformation}

This section contains the key step in the proof of \cref{thm:main}.  We refer to a smooth family of colored point sets parameterized by $[0,1]$ as a {\bf path of point sets}, which we denote by $U(t):=\{ U_i(t)\}_{i=1}^M$. As before, we assume that each color class has an odd number of (paths of) points. We say that a path of point sets is {\bf generic} with respect to an $n$-tuple of sets of vectors $\{B_{L_i}\}_{i \in [n]}$ if for at most a finite set of times $t$, which we call {\bf exceptional times} there is a unique $i$, such that $U(t) \cup B_{L_i}$ is not generic. Moreover we assume that at an exceptional time $t_j$, the point set $U(t_j)$ is {\bf almost generic}, in the following sense.  There exists a colorful set $S$ of $M$ points and a valid partition $P$ of $S$ such that the hyperplane arrangement they induce contains exactly $M+1$ points.  Moreover, this set of $M+1$ points and the induced partition are uniquely determined, in the sense that there exist a unique index $i\leq n$, and a set of vectors $V(t_j) \subset U(t_j)$, with $|V(t_j)|=l_i+k_i$, such that any subset $U'(t_j)\subset U(t_j)$, for which $U(t_j)' \cup B_{L_i}$ is non-generic must contain $V(t_j)$, and $V(t_j) \cup B_{L_i}$ which is itself non-generic.  We say that $\mathcal H_i$ is the \textbf{oversaturated} family of $\mathcal{H}$.

\begin{lemma}\label{parity} Let $\mathcal L$ be a family of $n$ subspaces, and $\mathcal U(t)$ a  generic path of odd point sets of $\rr^d$. The parity of the number of valid bisecting arrangements does not depend on $t$. 
\end{lemma}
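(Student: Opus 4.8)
The strategy mirrors the warm-up argument for the ham sandwich theorem, but now the ``events'' that can create or destroy a bisecting arrangement are governed by the valid-partition bookkeeping of \cref{lem:conteo-grande} rather than by a single hyperplane through one extra point. Fix a generic path $\mathcal U(t)$ and let $t_j$ be one of its finitely many exceptional times. Away from exceptional times the point set $U(t)$ is generic for every $\{B_{L_i}\}$, so by \cref{lem:conteo-grande} the valid arrangements are in bijection with valid $(l,k)$-partitions of colorful $M$-subsets of $U(t)$, and (as in the previous subsection) exactly those whose colorful subset equals the current set of ``center'' points — equivalently, those for which each hyperplane contains exactly one point of its color and the chessboard regions split the remaining roughly symmetric points evenly — are bisecting. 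So the count of bisecting valid arrangements is locally constant between exceptional times; the whole content is to show its parity is unchanged as $t$ crosses a single $t_j$.

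At $t_j$ the failure of genericity is localized: there is a unique index $i$ and a set $V(t_j)\subset U(t_j)$ with $|V(t_j)| = l_i + k_i$ such that $V(t_j)\cup B_{L_i}$ is the only obstruction to genericity, and the associated ``almost generic'' arrangement $\mathcal H$ has its family $\mathcal H_i$ oversaturated, i.e. $\mathcal H_i$ contains one extra point of $U(t_j)$ beyond the $l_i+k_i-1$ it generically carries. I would first invoke \cref{circulo} to understand this degenerate configuration: the oversaturated family pins down a codimension-two subspace $K$ together with a normal direction $v_i\in L_i$, and the $k_i$ translates of $K$ covering $V(t_j)$ realize a fixed partition $X$ of $V(t_j)$ into $k_i$ nonempty parts. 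On the two sides of $t_j$ the extra point leaves the arrangement on one side or the other, so among the arrangements ``close'' to $\mathcal H$ — those obtained by choosing which $l_i+k_i-1$ of the $l_i+k_i$ points of $V$ lie on the $\mathcal H_i$-hyperplanes — each is a genuine valid arrangement on exactly one side of $t_j$ (before or after), because the omitted point crosses the relevant hyperplane transversally as $t$ passes $t_j$ (this transversality is where genericity of the path in $t$ is used). The key combinatorial step is then: among this cluster of nearby arrangements, the ones that are simultaneously \emph{bisecting} come in pairs matched across $t_j$ — each bisecting arrangement destroyed at $t_j^-$ is matched to exactly one bisecting arrangement created at $t_j^+$ by swapping the omitted point, \emph{unless} the swap sends it outside the bisecting set, in which case a second (nearby) swap restores it, producing a net change of $0$ or $\pm 2$. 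Summing the local changes over all clusters affected by $t_j$ (there are finitely many, and clusters not touching $V(t_j)$ are unaffected), the parity is preserved.

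Concretely the pairing works as in the ham-sandwich proof: if a valid arrangement $\mathcal F$ stops being bisecting as $t\uparrow t_j$, then at $t_j$ the oversaturated hyperplane of $\mathcal F_i$ acquires an extra point $p$; replacing in $\mathcal F_i$ the point it was ``supposed'' to carry by $p$ — which amounts to picking a different element of $V(t_j)$ to omit — yields an arrangement $\mathcal F'$ that agrees with $\mathcal F$ at $t_j$ on all colors except that color $i$ is only \emph{almost} bisected, and as $t$ moves through $t_j$ the omitted point passes from one chessboard region of $\mathcal F'$ to the other, so $\mathcal F'$ is bisecting for exactly one of $t_j^-,t_j^+$. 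If $\mathcal F'$ becomes bisecting at $t_j^+$ the count of bisecting arrangements is unchanged by this pair; if $\mathcal F'$ was already bisecting at $t_j^-$ the count drops by $2$. The symmetric analysis for arrangements that \emph{start} being bisecting gives changes of $0$ or $+2$. Hence in all cases the number of bisecting valid arrangements changes by an even integer across $t_j$, and since valid bisecting arrangements are counted up to the $G$-action (dividing a parity-invariant $G$-free count by the odd quantity $|G|$ when $N$ is odd, or more carefully tracking $G$-orbits), the parity of the number of \emph{equivalence classes} of valid bisecting arrangements is likewise invariant.

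\textbf{Main obstacle.} The delicate point is the local analysis at an exceptional time when the oversaturated family $\mathcal H_i$ has $k_i>1$ hyperplanes: one must check that \emph{only} arrangements whose $\mathcal H_i$-part is degenerate can change status at $t_j$ (arrangements whose degeneracy sits in a different $\mathcal H_j$ are literally unchanged, since that part of the configuration is still generic), and that within the affected cluster the ``swap the omitted point'' involution is well-defined and pairs up exactly the status-changing bisecting arrangements. Making precise which of the $\binom{l_i+k_i}{1}$ omission choices yield a valid arrangement on which side of $t_j$ — and that the bisecting condition (a median condition on the roughly symmetric color classes, together with the well-separated hypothesis so that each $U_j$ is cut by a unique hyperplane) transfers correctly under the swap — is the crux, and it is exactly the place where \cref{circulo}, the well-separated and rough-symmetry hypotheses, and the transversality built into ``generic path'' all have to be combined. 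I expect everything else (local constancy between exceptional times, the reduction to a single $t_j$, the $G$-action bookkeeping) to be routine.
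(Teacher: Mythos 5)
There is a genuine gap, and it sits exactly at the point you flag as the ``crux''. Your pairing is a purely local involution: at the exceptional time you only consider arrangements obtained by re-choosing which point of the degenerate set $V(t_j)$ to omit, i.e.\ a swap of the extra point $p$ with the point $p_0$ of the same color that the arrangement was carrying. This swap produces a valid arrangement only in the special case when $p_0$ lies in the \emph{core} of the oversaturated parallel family (the case that occurs in the ham-sandwich warm-up). In general $p_0$ sits on a hyperplane of a different parallel family, or on a hyperplane of the oversaturated family containing no other point; removing $p_0$ then leaves that hyperplane underdetermined (it would contain no point, or its direction is no longer pinned down by the required $l_i+k_i-1$ incidences), so no member of your ``cluster'' of $\binom{l_i+k_i}{1}$ omission choices is the partner arrangement. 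The actual partner is found by moving that hyperplane --- translating it (possibly through infinity) or rotating the whole parallel sub-family about the codimension-two flat given by \cref{circulo} --- until it picks up a new point; that new point typically unbalances a \emph{different} color, and the process cascades through several families before it terminates at a point of the core. The resulting almost-bisecting partner $\mathcal F(t_j)$ can differ from $\mathcal H(t_j)$ in many hyperplanes and in which points of other colors it contains, so it is invisible to any analysis confined to $V(t_j)$. Establishing that this chase terminates (no cycles, argued in the paper by reversibility of the one-dimensional path of arrangements) and produces a \emph{unique} partner with the same core is the content of \cref{moving}, and it is the bulk of the proof of \cref{parity}; your proposal replaces it by an involution that simply is not well defined in the generic case, so the claimed $0/\pm2$ bookkeeping does not follow.

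Two secondary issues. First, you invoke rough symmetry and well-separation of the color classes to decide which arrangements are bisecting and to transfer the bisecting condition under the swap; these are hypotheses of the endpoint construction (the count $N$ at $t=1$), not of \cref{parity}, whose point sets along the path are arbitrary odd colored sets, so they cannot be used in the crossing analysis. Second, the remark about ``dividing a parity-invariant $G$-free count by the odd quantity $|G|$'' is not sound: $|G|$ need not be odd, and no such division is needed --- the geometric (unlabeled) arrangements are already the objects whose parity one tracks, with \cref{almost_bisecting} applied to the pair $\mathcal H,\mathcal F$ of almost-bisecting arrangements at $t_j$ giving the four cases of net change $0$ or $\pm 2$.
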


 Assuming this lemma we are ready to give a proof of \cref{thm:main} for generic oddly supported measures. 

\begin{proof}[Proof of \cref{thm:main}] 
It is easy to see that any two generic families of point sets
$$\{U_1,U_2, \ldots U_M\}$$ and $$\{U_1',U_2', \ldots U_M'\}$$ such that $|U_i|=|U_i'|$ can be connected by a generic  path of point sets. 
One might achieve this by choosing arbitrary bijections and moving one point at the time on a path that avoids the $(d-2)$-flats that contain more than $d-1$ points of $U$. So if $U(0)$ is an arbitrary generic point set, with $M$ color classes, with an odd number of points in each, there exists a generic path of odd point sets $U(t)$ such that the colored point set $U(1)$ is generic and the color classes are very well separated. By \cref{parity} the number of valid bisecting arrangements of $U(0)$ has the same parity as the number of such arrangements of $U(1)$. By \cref{lem:conteo-grande} the latter equals $N$. In particular if $N$ is odd, at least one valid bisecting arrangement should exist for $U(0)$.
\end{proof}

 If at an exceptional time there exists a hyperplane arrangement $\mathcal H$, that is valid except that it intersects two points of the same color, we call it {\bf almost valid.} For non-exceptional times we defined a bijection that takes each valid partition of a colorful point set to a valid hyperplane arrangement. At exceptional times, this function is still well defined but cannot be inverted. When we refer to a path of valid arrangements $\mathcal H(t)$, we assume that the valid partition, and the colorful subset are fixed. 

 We will say that a valid arrangement is {\bf almost bisecting} if it is bisecting for every color class except for one that we call the {\bf unbalanced color}. This color class is assumed to contain two points that are incident to the arrangement and the number of points of this color in the interiors of the chessboard regions differ by exactly one.

\begin{claim}\label{almost_bisecting} Let $\mathcal L$ be a family of $n$ subspaces, and $\mathcal U(t)$ a  generic path of $M$-colored odd point sets of $\rr^d$, let $t_*$ be an exceptional time, so that $\mathcal H(t_*)$ is an almost bisecting arrangement of $U(t_*)$. If a point of the unbalanced color $q(t_*)$ is contained in $U(t_*) \cap h(t_*)$ for some $h(t_*)\in \mathcal H(t_*)$, but for $\epsilon>0$ small enough,  $q(t_*-\epsilon) \notin h(t_*-\epsilon)$, then exactly one of the arrangements $\mathcal H(t_*-\epsilon)$, $\mathcal H(t_*+\epsilon)$ is a valid bisecting arrangement. 
\end{claim}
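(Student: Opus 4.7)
The plan is to track how the color-counts on each side of $\mathcal H(t)$ change as $t$ passes through $t_*$, and to show that the unbalanced color is rebalanced on exactly one side. First I would use that the valid partition and the colorful subset are fixed along the path, so by \cref{lem:conteo-chico} and \cref{lem:conteo-grande} the arrangement $\mathcal H(t)$ is determined continuously by the moving colorful subset; in particular it varies continuously through $t_*$. Write $c$ for the unbalanced color, of odd total size $2k+1$, and $p_c(t)$ for its colorful representative. Since $p_c(t)\in\mathcal H(t)$ for every $t$, the point $q$ in the hypothesis must be the second, ``extra'' color-$c$ point incident to $\mathcal H(t_*)$.

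Second, I would use genericity of the path to conclude that $q(t)$ transversally crosses $h(t)$ at $t=t_*$: any higher-order tangency would be an additional codimension condition on top of the single incidence event permitted at an exceptional time. Hence for $\epsilon>0$ small enough, $q(t_*-\epsilon)$ and $q(t_*+\epsilon)$ lie on opposite open chessboard regions.

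Then comes the counting. By the almost-bisecting hypothesis, the $2k-1$ color-$c$ points distinct from $p_c(t_*)$ and $q(t_*)$ split across the two open chessboard regions of $\mathcal H(t_*)$ as $(k,k-1)$. At $t\in\{t_*\pm\epsilon\}$ the arrangement is generic, $p_c(t)$ is the unique color-$c$ point on $\mathcal H(t)$, and $q(t)$ lies in one of the open regions. So the interior split of the $2k$ remaining color-$c$ points is either $(k,k)$ or $(k+1,k-1)$, depending on which side receives $q(t)$. Since $q$ swaps sides across $t_*$, exactly one of $t_*-\epsilon$, $t_*+\epsilon$ yields a bisecting split for color $c$. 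Every other color is bisected at $t_*$ by hypothesis, and genericity forbids any other point of $U(t)$ from approaching $\mathcal H(t)$ near $t_*$, so those colors keep the same interior counts and remain bisected at both $t_*\pm\epsilon$.

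The main obstacle I expect is the genericity bookkeeping: one must argue carefully that on a small interval around $t_*$ the only event that can flip the bisecting status is $q$'s transverse crossing of $h$, since at every other $t$ in the interval the configuration is generic and locally stable. Once this is cleanly extracted from the definition of a generic path, the claim reduces to the elementary count above.
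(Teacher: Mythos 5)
Your argument is correct and follows essentially the same route as the paper's (very terse) proof: localize near $t_*$, use genericity to get a transverse crossing of $q$ through $h$, and then the counting built into the definition of almost bisecting shows that exactly one of the two sides rebalances the unbalanced color while all other colors keep their interior counts. Your version simply spells out the $(k,k-1)$ versus $(k,k)$/$(k+1,k-1)$ bookkeeping that the paper leaves implicit.
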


\begin{proof}
Since the problem is local, we might parameterize in a neighbourhood around $t_*$ so that all the points of $U(t)$ are fixed except for $q(t)$ with a trajectory that is transversal to $h(t_*)$. From the definition of almost bisecting it follows that exactly one of the two directions balances the color of $q$
\end{proof}

Let $\mathcal H$ be a valid or almost valid arrangement, and $\mathcal H_i$ a sub-arrangement with common perpendicular direction $v$ in $L_i$. The {\bf core} of $\mathcal H_i$, is the set of points in $U$ that determine the direction $v$, in other words a point  in $U \cap \mathcal H_i$ is in the core, if it is contained in a hyperplane $h \in \mathcal H_i$, such that $|h \cap U|>1$. 

\begin{lemma}\label{moving} If $\mathcal U(t)$ is a  generic point set path, $t_*$ an exceptional time, and $\mathcal H(t_*)$ an almost bisecting arrangement, then there exists a unique valid arrangement path $\mathcal F(t)\neq \mathcal H(t)$, such that $\mathcal F(t_*)$ is almost bisecting, and the oversaturated family of $\mathcal F(t_*)$ has the same core as the oversaturated family of $\mathcal H(t_*)$.
\end{lemma}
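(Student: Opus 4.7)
The plan is to construct $\mathcal{F}$ by a canonical swap of the extra point with the $S$-representative of the unbalanced color and then to deduce uniqueness from the almost-bisecting hypothesis. At $t_*$, the almost-generic condition singles out a unique oversaturated sub-arrangement $\mathcal{H}_i(t_*)$ and a unique minimal non-generic set $V(t_*)\subset \mathcal{U}(t_*)$ of size $l_i+k_i$ lying on the $k_i$ parallel hyperplanes of $\mathcal{H}_i(t_*)$ in some direction $v\in L_i$. The almost-bisecting hypothesis provides two points $p,q$ of the unbalanced color $c$ on $\mathcal{H}(t_*)$: $p\in S$ is the $S$-representative of $c$ in the valid partition $\chi$ defining $\mathcal{H}$, and $q\notin S$ is the extra point realising the oversaturation. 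I would set $S':=(S\setminus\{p\})\cup\{q\}$, which remains colorful since $p$ and $q$ share the color $c$, and define $\chi'$ by removing $p$ from its $\chi$-group and refinement part and inserting $q$ into the refinement part of the $i$-th group corresponding to the hyperplane of $\mathcal{H}_i(t_*)$ that contains $q$.

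The verification that $\chi'$ is a valid $(l,k)$-partition is where the almost-generic condition is used: the minimality of $V(t_*)$ forbids $l_i+1$ points of $\mathcal{U}(t_*)$ on a common hyperplane with normal in $L_i$ (which would yield a smaller non-generic subset), and this cap on hyperplane occupancy prevents the swap from emptying a refinement part. Once $\chi'$ is validated, \cref{lem:conteo-chico} defines the path $\mathcal{F}(t)$ for $t$ near $t_*$. At $t_*$ itself, the $l_i+k_i$ points of $V(t_*)$ still lie on the same $k_i$ parallel hyperplanes regardless of which $m_i$-subset is selected as $\chi'$-group-$i$, so $\mathcal{F}_i(t_*)=\mathcal{H}_i(t_*)$ as geometric arrangements. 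Consequently the cores agree, and almost-bisecting transfers verbatim since it is determined by the geometric arrangement and its intersections with $\mathcal{U}(t_*)$. For $t\neq t_*$, \cref{lem:conteo-chico} yields distinct directions for $\mathcal{F}_i(t)$ and $\mathcal{H}_i(t)$ because the $(l_i+k_i-1)$-subsets of $V(t)$ generating them differ, proving $\mathcal{F}(t)\neq\mathcal{H}(t)$.

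Uniqueness then follows by running the argument in reverse: any valid arrangement path $\mathcal{F}'\neq\mathcal{H}$ with the required properties must have its oversaturated family at $t_*$ determined by the same $V(t_*)$ (by the uniqueness clause of the almost-generic condition) and the same direction as $\mathcal{H}_i(t_*)$ (forced by equality of cores), so its partition must differ from $\chi$ by a single-point swap within $\chi$-group-$i$ involving $q$; colorfulness of the new colorful subset then forces the swapped-out point to have color $c$ and hence to equal $p$. The step I expect to be the main obstacle is the validity of $\chi'$ in the subcase where $p$ does not lie on the oversaturated family $\mathcal{H}_i(t_*)$ but on some $\mathcal{H}_j$ with $j\neq i$: the naive single-point swap does not preserve the required group sizes, and the construction has to be promoted to a coordinated double exchange that moves $p$ into $\chi'$-group-$i$ while re-homing a companion point of $\chi$-group-$i$ into $\chi'$-group-$j$, together with a separate verification that such a companion exists and that non-emptiness of all refinement parts is preserved.
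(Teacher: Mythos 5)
Your simple swap handles only the easy case of the lemma, and the case you flag at the end as ``the main obstacle'' is in fact the entire content of the paper's proof, so the gap is genuine. The single-point exchange $S'=(S\setminus\{p\})\cup\{q\}$ produces a valid arrangement with the same oversaturated core only when $p$ (the $S$-representative of the unbalanced color, $p_0$ in the paper) already lies in the \emph{core} of the oversaturated family: then its refinement part has another point, the group sizes are unchanged, and the swapped partition is again a valid $(l,k)$-partition inducing the same geometric arrangement at $t_*$. When $p$ lies on a different parallel family $\mathcal H_j$, or even on the oversaturated family but alone on its hyperplane, no local exchange of one or two points works: removing $p$ either breaks the prescribed group sizes $m_j=l_j+k_j-1$ or empties a refinement part, and there is no reason the partner arrangement $\mathcal F(t_*)$ should differ from $\mathcal H(t_*)$ by any bounded combinatorial exchange at all. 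Your proposed ``coordinated double exchange'' presupposes that a suitable companion point of group $i$ happens to lie on a hyperplane through the right points of group $j$, which generic position precisely rules out.

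The paper instead constructs $\mathcal F(t_*)$ dynamically: it freezes the point set at $t_*$ and follows a path of almost bisecting arrangements, translating the hyperplane through $p$ when it carries no other point, or rotating its whole parallel family about the $(d-2)$-flat supplied by \cref{circulo} when it does, each newly hit point defining a new unbalanced color and a new moving hyperplane; the process stops only when a point in a color of the core of the oversaturated family is reached, and termination (hence existence) plus uniqueness follow from finiteness of the almost valid configurations together with reversibility of this one-dimensional pivoting path, which excludes cycles. Your uniqueness argument inherits the same defect as your existence argument: it assumes the partner arrangement differs from $\chi$ by a single-point swap within group $i$, which fails exactly in the hard case. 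Separately, your claim that minimality of $V(t_*)$ ``forbids $l_i+1$ points on a common hyperplane with normal in $L_i$'' is not what the almost-generic condition says (it asserts a unique minimal non-generic subset of size $l_i+k_i$), but repairing that would still not close the main gap described above.
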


Before going into the proof of this key lemma, let us show how combining it with \cref{almost_bisecting} we obtain \cref{parity}. 

\begin{proof}
Again we might think of $q(t)$ moving and the rest of the points being fixed. Since the path is generic any bisecting arrangement at time $t_j-\epsilon$ that is not almost bisecting at time $t_j$, is bisecting in an interval that contains the interval $[t_{j-1},t_{j+1}]$. So we only need to analyze the paths of arrangements that are almost bisecting at $t_j$. 
By \cref{moving} if there are any such arrangements, then there is a set of in disjoint pairs. Let $\mathcal H(t_j+\epsilon)$ and $\mathcal F(t_j+\epsilon)$ be one such pair. Now we use \cref{almost_bisecting} to analyse the four cases: if both are bisecting, the number of bisecting arrangements increases by two at $t_j$, if both are non-bisecting, then the number of bisecting arrangements decreases by two at $t_j$. If one of them is bisecting and the other one is not (this can happen in two ways), then the number of bisecting arrangements remians invariant at $t_j$. When we consider all such pairs, the parity constraints add up, and therefore the parity of the total number stays invariant.
\end{proof}

Our final step is proving \cref{moving}. 

\begin{proof}[Proof of \cref{moving}]

Assume that $\mathcal H(t_j)$ is almost bisecting.  We are going to stop the movement of the points, and move continuously $\mathcal H(t_j)$ along a path of arrangements until we find another valid almost bisecting arrangement, $\mathcal F(t_j)$. 

We denote the resulting path of valid almost bisecting hyperplane arrangements by $\mathcal A(s)$. So $\mathcal A(0):=\mathcal H(t_*)$, $\mathcal A(1):=\mathcal F(t_*)$ and for every $s \in [0,1]$, the point set $\mathcal U(t_*)$ is fixed.

 Denote by $(S_1,P_1)$ the colored set of points and valid partition that induced $\mathcal{H}(t_j-\varepsilon)$.  Find the unique hyperplane $h^1(t_*-\epsilon) \in \mathcal H(t_*-\epsilon)$ that contains the point $p_1$ of the unbalanced color not in $S_1$ at time $t_j$.  This is the only point contained by $\mathcal H(t_*)$ that is not in $S_1$. We denote by $S = S_1 \cap \{p_1\}$ the set of $M+1$ points contained in $H(t_*)$ and by $P$ the partition induced by $\mathcal{H}(t_*)$ on $S$, which consists of adding $p_1$ to one of the sets in the second layer of $P_1$.
 
 We call this $h^1$, the {\bf moving hyperplane}.  It will remain the moving hyperplane until we reach a point of $U(t_*)$ and we might switch moving hyperplane.  Denote by $H(t_*)$ the group of parallel hyperplanes that contains $h^1(t_*)$.  Note that $H(t_*)$ is the oversaturated family of $\mathcal{H}(t_*)$.
 
 Let $h(t_*)$ be the hyperplane of $\mathcal{H}(t_*)$ that contains the point $p_0$ of $S_1$ of the same color as $p_1$.  It might be that $h(t_*)=h^1(t_*)$, this is what happens in the ham sandwich theorem. In this case, we can put $\mathcal F(t_*)=\mathcal H(t_*)$. The arrangement $\mathcal F(t)$ will be induced by replacing $p_0$ by $p_1$ in both $S_1$ and $P_1$.  In general, if $p_0$ is in the core of $H(t_*)$, then we can simply replace $p_0$ by $p_1$ in both $S$ and $P$ to obtain $\mathcal{F}(t_*)$.  Otherwise, we will start moving $h(t_*)$, with the process described below, until we reach a new point of $U(t_*)$. 
 
We now explain how to define a path $\mathcal A(s)$ moving $h^1(s)$ (and sometimes the hyperplanes parallel to it) until we reach a new point that defines a new unbalanced color, which defines a new moving hyperplane $h^2(s)$. We repeat this process until we reach a point that is in the core of $H(t_*)$, at which point the process terminates.  We first describe the movement process, than then show that it always terminates.

We will define a sequence of hyperplanes $h^1, \dots, h^r, \dots$ and time intervals \[[s_1,s_2], \dots, [s_r, s_{r+1}]\] such that $h^r$ is the moving hyperplane exactly in $[s_r,s_{r+1}]$.  The path of arrangements $\mathcal A(s)$ is the concatenation of all these movements. Below, we explain how $h^r(s)$ moves when $s \in [s_r,s_{r+1}]$. We denote by $j(r)$ the color that is unbalanced at time $r$.  Throughout this process

 \textbf{If $h^{r-1}$ at time $s_{r}$ reaches a point whose color is not in the core of $H(t_*)$:} 

Let $h^{r}(s_r)$ be the hyperplane containing the other point of color $j(r)$
 
 \textbf{Case 1 : $h^r(s_{r})$ contains a single point.}

 In this case we translate $h^r(s)$ along its perpendicular direction. 
 Among the two opposite possible directions there is one along which the balance of color $j(r)$ is immediately fixed.
 We continue translating $h^r(s)$ in the same direction until we hit another point not covered by the arrangement. We are allowed to pass the translation through infinity and come back on the parallel hyperplane which re-enters the convex hull of the points from the opposite side. So eventually we hit a new point.  The new point defines the new unbalanced color $j(r+1)$,
 we let $h^{r+1}(s)$ be the hyperplane containing the other point of the new unbalanced color $j(r+1)$.  We update $S$ and $P$ and reiterate this procedure. See \cref{fig:translation}.

\textbf{Case 2 : $h^r(s_r)$ contains more than one point.}

 Let $p$ be the point of color $j(r)$ in $h^r(s_r)$.  Let $H'$ be the family of hyperplanes parallel to $h^r(s_r)$.  Note that $p$ is in the core of $H'$.  Since $p$ is not in the core of $H(t_*)$, this is a different group of parallel hyperplanes.  Denote by $L_{i(r)}$ the space that determines the possible normal vectors for $h^r(s_r)$, and $k_{i(r)}$ be the number of hyperplanes of $H'$.  Let $X$ be the set of points covered by $H'$.
 
 Consider the set $X\setminus\{p\}$.  By \cref{circulo}, there is a unique $(d-2)$-dimensional space $K$ and $k$ translates of $K$ that cover $X\setminus\{p\}$ inducing the same partition as $H'$ on them.  Let $K_1$ be the translate that covers $X \cap h^r(s_r)$.  Note that $K^{\perp}$ is a two-dimensional space.  Under the orthogonal projection to $K^{\perp}$, $X \setminus \{p\}$ is mapped to $k$ distinct points $p_1, \dots, p_k$.  The way we form $H'$ is by joining $p_1$ with the projection of $p$, forming a line $\ell$, and then take the inverse of the orthogonal projection of the $k$ translates of $\ell$ that contain each of $p_1, \dots, p_k$.

 To rotate $h^r$, we simply rotate $\ell$ around $p_1$, doing the same for its $k-1$ translates, and then take the inverse image.  As before, of the two possible directions to start the rotation, there is a unique direction that balances color $j(r)$.  We continue this rotation until the time $s_{r+1}$ that a hyperplane of $H'$ contains a point not previously contained by $\mathcal{A}(s_r)$ for $s>s_r$.  We update $P, S$ and repeat this process.  See \cref{fig:rotation} for an illustration.

This finishes the description of the algorithm to construct $\mathcal A(s)$. 

{\bf Correctness}

We now finish the proof of \cref{moving}. We focus on an exceptional time, so the points are not moving, only the hyperplanes. 
Notice that every time we switch the moving hyperplane, we obtain a new almost bisecting arrangement. We claim that the process terminates by eventually reaching a new point in a color of the core of $\mathcal H(T_*)$. As before we parametrized time in 

Assume the contrary in search for a contradiction.  The sequence of moving hyperplanes is finite, since they are all almost valid, we must eventually have to repeat a pair consisting of a configuration and an unbalanced color, forcing us into a cycle. But this is absurd since the path $\mathcal A(s)$ is one-dimensional and can be reversed all the way to $\mathcal H(t_*)$. There cannot be a cycle.
\end{proof}

\begin{figure}
    \centering
    \includegraphics{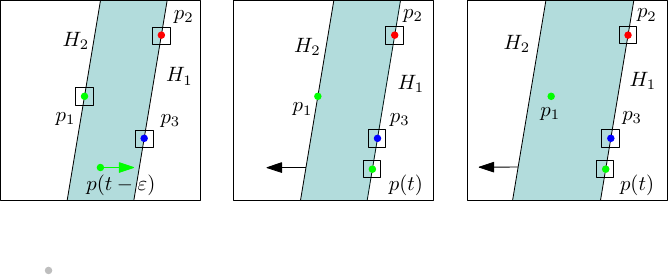}
    \caption{Example of a case when we a translation is needed.  In this case, $p$ belongs to $\supp \mu_1$ and will cross the hyperplane $H_2$ of the arrangement, which contains $p_2, p_3$.  If $p_1$, the point in the arrangement from $\supp \mu_1$, is contained in a hyperplanes $H_2$ that contains no other point of the arrangement, we need to translate $H_2$ until it hits a new point of $U(t)$ not already contained in $A(t)$.  The direction we translate $H_2$ is uniquely determined by which region lost $p(t-\varepsilon)$, as $p_1$ should replace $p(t-\varepsilon)$ in that region.\\
    This figure exemplifies the case when $H_2$ is in the class of hyperplanes parallel to $H_1$.  The same process follows if this is not the case.  The square boxes indicate the candidates for the points generating the new arrangement $B(t)$.  If the first point $H_2$ arrives to is in the support of $\mu_1(t), \mu_2(t), \mu_3(t)$, we would replace $p(t), p_2,$ or $p_3$ by it, as needed.  If it is in the support of another measure, further translations or rotations are needed to arrive to $B(t)$.}
    \label{fig:translation}
\end{figure}

\begin{figure}
    \centering
    \includegraphics{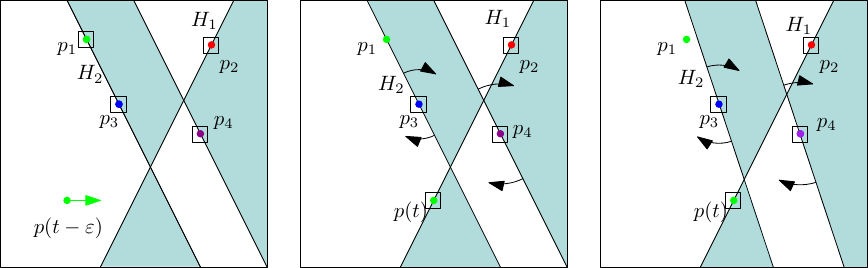}
    \caption{A similar setting as \cref{fig:translation}, but now $p_1$, the point of $A(t)$ in the same measure as $p(t)$ is a hyperplane which has additional point of the support ($p_3$ in the figure above).  In this case, we rotate the class of hyperplanes parallel to $H_2$.  The rotation of $H_2$ is around $p_3$, and every hyperplane parallel to $H_2$ has a unique rotation point.  The direction of the rotation is uniquely determined, as $p_1$ must be at time $t$ in the same region as $p(t-\varepsilon)$ was at time $t-\varepsilon$.  The rotation continues until the class of hyperplanes parallel to $H_2$ contain as new point of $U(t)$ that is not contained in $A(t)$.}
    \label{fig:rotation}
\end{figure}

\subsection{From oddly supported to general measures}
Our theorem follows for the particular case in which each 
of the measures is of the form $\frac{1}{|X|} \sum_{x \in X} \delta_x$, 
and $X \subset \rr^d$ is a finite set with an odd number of points. 
An advantage of this case, which we call an \emph{oddly supported measure},
is that every family of parallel hyperplanes that bisects a set with an odd number of points, must have one of the hyperplanes passing through one of the points. Any affine hyperplane is the zero set of an affine function of the form \[(x_1,x_2, \ldots x_d)\to \langle (x_1,x_2, \ldots x_d), (v_1,v_2, \ldots v_d) \rangle+v_{d+1},\]  where $(v_1,v_2, \ldots v_d,v_{d+1})\in \mathbb S^d$. The cases where the first $d$ coordinates of $v$ are zero correspond to the the empty set when $v_{d+1}=-1$ and the whole space when $v_{d+1}=1$.

Now for any Borel measure $\mu$, if $X$ is a sample of $2r+1$ points independently at random from $\mu$ and form the measure $\mu^r:=\frac{1}{|X|} \sum_{x \in X} \delta_x$, then, as $r$ tends to infinity, $\mu$-almost surely $\mu^r$ converges weakly to $\mu$. That is, for every closed set $C$, $\lim \sup_{r \to \infty}\mu^r(C)\leq\mu(C)$.
Assume that we have shown the theorem for oddly supported measures, let $A_r, B_r$ be the corresponding (closed) chessboard regions. 

\begin{lemma} Let $L_1,L_2, \ldots L_n$ be subspaces of $\mathcal \rr^d$, and $\mu$ be a Borel measure in $\rr^d$. Let $\mu^r$ be a sequence of measures converging weakly to $\mu$. If for each $r$, $(A_r, B_r)$ is a valid chessboard partition of $\mathbb R^d$ that bisects $\mu^r$, then there exists a subsequence of chessboard partitions that converges to a chessboard partition $(A,B)$ that bisects $\mu$.
\end{lemma}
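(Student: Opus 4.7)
The plan is to extract a convergent subsequence of the arrangement parameters, build a candidate limit partition from the polynomial that cuts out the chessboard coloring, and verify bisection with the Portmanteau theorem applied to open cells of the limit.

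First, I would parameterize each arrangement by its unit normals $v_i^r \in L_i$ and signed offsets $t_{i,j}^r \in \R$ (for $i \in [n]$, $j \in [k_i]$). The normals lie in the compact product of unit spheres $\prod_i (L_i \cap S^{d-1})$, and each offset lies in the sequentially compact two-point compactification $[-\infty, +\infty]$. A diagonal extraction yields a subsequence along which $v_i^r \to v_i$ and $t_{i,j}^r \to t_{i,j} \in [-\infty, +\infty]$. Letting $I = \{(i,j): t_{i,j}\in \R\}$, I set
\[
p(x) = \sigma \prod_{(i,j)\in I}\bigl(\langle x, v_i\rangle - t_{i,j}\bigr), \qquad \sigma = (-1)^{|\{(i,j):\,t_{i,j}=+\infty\}|},
\]
and define $\tilde A = \{p \ge 0\}$, $\tilde B = \{p \le 0\}$, and $\mathcal H = \{p = 0\}$. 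The sign $\sigma$ is forced by the observation that on any bounded set, the positive halfspace of a hyperplane escaping to $+\infty$ eventually contains no point, whereas the positive halfspace of one escaping to $-\infty$ eventually contains every point; these flips must be absorbed into the overall sign of the limit polynomial.

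Next, denoting by $p_r$ the polynomial of the $r$-th arrangement, the key pointwise fact is that for each $x \notin \mathcal H$ one has $\mathrm{sign}(p_r(x)) = \mathrm{sign}(p(x))$ for all sufficiently large $r$, and this agreement is uniform on compact subsets of $\R^d \setminus \mathcal H$. In particular, for every open set $G$ with $\bar G$ compact and $\bar G \subset O' := \{p<0\}$, we have $\bar G \subset O'_r := \{p_r < 0\}$ eventually, and the open-set direction of the Portmanteau theorem gives $\liminf_r \mu^r(O'_r) \ge \liminf_r \mu^r(G) \ge \mu(G)$. Taking the supremum over such $G$ via inner regularity of $\mu$ on the open set $O'$ upgrades this to $\liminf_r \mu^r(O'_r) \ge \mu(O')$.

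Finally, the bisection hypothesis $\mu^r(A_r) \ge \mu^r(\R^d)/2$ reads $\mu^r(O'_r) = \mu^r(\R^d) - \mu^r(A_r) \le \mu^r(\R^d)/2$, and since $\mu^r \to \mu$ weakly one has $\mu^r(\R^d) \to \mu(\R^d)$ by testing with the constant function $1$. Combining, $\mu(O') \le \mu(\R^d)/2$, which, together with $\tilde A = \R^d \setminus O'$, is equivalent to $\mu(\tilde A) \ge \mu(\R^d)/2$. The symmetric argument with the roles of $A_r$ and $B_r$ exchanged gives $\mu(\tilde B) \ge \mu(\R^d)/2$, so $(\tilde A, \tilde B)$ bisects $\mu$. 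The most delicate step is the case when some hyperplanes escape to infinity: the sign bookkeeping in the definition of $\sigma$ is essential for the polynomial-sign convergence to work, and one must use the closed-halfspace (polynomial) formulation of bisection so that the argument does not require $\mu$ to avoid the limit arrangement $\mathcal H$.
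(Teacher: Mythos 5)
Your proposal is correct and follows essentially the same route as the paper: compactify the space of valid arrangements so that hyperplanes may escape to infinity, extract a convergent subsequence, and pass the bisection inequality to the limit via weak convergence (Portmanteau). Your normals-plus-extended-offsets parameterization with the sign $\sigma$ is just an explicit version of the paper's $\mathbb{S}^d$ compactification (where a degenerate hyperplane becomes the empty set or the whole space), and your open-set/liminf Portmanteau argument with inner regularity is a more carefully justified version of the paper's inequality chain bounding $\mu(A)$ from below.
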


\begin{proof}
The chessboard partition $(A_r, B_r)$ is determined by a valid hyperplane arrangement $\mathcal H^r$. Valid hyperplane arrangements are parametrised by a closed subset of $(\mathbb S^d)^{|\mathcal H^r|}$, which is a compact space. Hence, a subsequence of arrangements (and of chessboard regions) converges to a hyperplane arrangement $\mathcal H$.

Let $A$ and $B$ be the corresponding limiting chessboard regions. Then for any $\delta>0$ there exists $r_0$ large enough, so that for every $r>r_0$,
\[\mu(A)\geq \mu^r(A) - \delta \geq \mu^r(A_r) - \mu^r(A_r\setminus A)-\delta \geq\]
\[ \mu^r(A_r) - \mu(A_r\setminus A)-2\delta
\geq
\mu^r(A_r)-3\delta\geq 
\frac{1}{2}-3\delta,\]
hence $\mu(A)\geq \frac{1}{2}$,
and similarly for $B$. 
\end{proof}

To derive \cref{thm:main} as claimed, we apply this lemma to each of the measures $\mu_i$, refining the converging subsequence of cheesboard regions. Since we do this a finite number of times, the final subsequence converges. Notice that we are allowing for one or more hyperplanes to escape to infinity.

\section{Necessity of conditions}\label{sec:counterexamples}

\subsection{Hyperplanes with fixed directions}

We show that for the case $l_1 = \dots = l_{n}=1$, if $L_1, \dots, L_n$ are all different and at least two of the numbers $k_1, \dots, k_n$ are odd, then \cref{thm:main} may fail.  Note that the parity condition for \cref{thm:main} in that instance is that the mulitnomial coefficient
\[
\binom{M}{k_1, \dots, k_n}
\]
must be odd.  This happens if and only iff the numbers $k_1,\dots, k_n$ do not share $1$'s in the same position when written in base two.  If two of them are odd, then we are breaking this property in the first instance.

\begin{claim}
Let $l_1 = \dots = l_{n}=1$, and $L_1, \dots, L_n$ be $n$ different lines in $\rr^d$ through the origin.  Let $k_1, \dots, k_n$ such that at $k_1, k_2$ are odd.  Then, there is a set of $M=k_1 + \dots + k_n$ measures on $\rr^d$ for which no set of $M$ hyperplanes such that for each $i\in [n]$, exactly $k_i$ of the hyperplanes are orthogonal to $L_i$ for each $i \in  [n]$ induces a chessboard coloring that bisects each measure.
\end{claim}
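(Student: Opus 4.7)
The plan is to extend the known Karasev--Rold\'an-Pensado--Sober\'on counterexample for the base case $n=2$, $l_1=l_2=1$, $k_1=k_2=1$. Let $(\nu_1,\nu_2)$ denote that base counterexample: two measures in $\rr^d$ supported in a small compact set $K$ near the origin for which no single hyperplane perpendicular to $L_1$ paired with a single hyperplane perpendicular to $L_2$ induces a chessboard coloring bisecting both. Under the hypothesis of the claim, I want to extend this pair to a collection of $M=k_1+\cdots+k_n$ measures for which no valid arrangement bisects every measure.

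I would augment $(\nu_1,\nu_2)$ with $M-2$ auxiliary measures supported far from $K$, designed to force all but two of the hyperplanes of any bisecting arrangement to pass through the supports of the auxiliary measures (and therefore to lie far from $K$). Concretely, for each $i\in\{3,\ldots,n\}$ I would add $k_i$ auxiliary measures and for each $i\in\{1,2\}$ I would add $k_i-1$ such measures. In a first attempt each auxiliary measure is a point mass at a generic far-away position; in a more rigid refinement, each is a two-atom measure $\frac{1}{2}(\delta_p+\delta_{p'})$ with $p-p'$ chosen to lie in $\bigcap_{l\neq i} v_l^{\perp}$ (where $v_l$ is a unit vector along $L_l$), so that the natural way to bisect the measure is to put a hyperplane perpendicular to $L_i$ between the two atoms. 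Granting such a rigid construction, any bisecting arrangement must use $k_i$ (resp.\ $k_i-1$) of its hyperplanes perpendicular to $L_i$ on the auxiliary measures, leaving exactly one free hyperplane perpendicular to $L_1$ and one perpendicular to $L_2$. Since the used hyperplanes lie far from $K$ they contribute only a global colour swap to the chessboard coloring restricted to $K$; the effective coloring on $K$ is determined by the two free hyperplanes, which by the base counterexample cannot bisect $(\nu_1,\nu_2)$. This is the sought contradiction.

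The main obstacle is rigidly enforcing the ``one free hyperplane in each of families $1$ and $2$'' configuration. With naive point masses in generic position the matching between auxiliary measures and hyperplanes is quite flexible, and in particular admits matchings that leave both free hyperplanes inside a single family (say, both perpendicular to $L_1$); by a Sober\'on--Takahashi style bisection with two parallel hyperplanes, such a configuration could bisect any two measures and would defeat the construction. Ruling out these alternative matchings---through the two-atom refinement above, or a similarly structured choice of auxiliary measures together with careful generic placement of the points $p_{i,j}$---is the technical heart of the argument. It is here that the hypothesis that $k_1$ and $k_2$ are both odd, which makes the multinomial coefficient $\binom{M}{k_1,\ldots,k_n}$ even, is used: it reduces the relevant failure mode precisely to the $(1,1)$-case covered by the Karasev et al.\ counterexample.
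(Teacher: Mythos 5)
Your reduction hinges on forcing any bisecting arrangement to leave exactly one ``free'' hyperplane orthogonal to $L_1$ and one orthogonal to $L_2$ near the base pair $(\nu_1,\nu_2)$, and this is precisely the step you do not prove; moreover, the rigid two-atom auxiliaries you propose do not achieve it. Under the paper's bisection convention ($\min(\mu(A),\mu(B))\geq \mu(\rr^d)/2$ with \emph{closed} chessboard regions), a measure $\tfrac12(\delta_p+\delta_{p'})$ is bisected as soon as \emph{any} hyperplane of the arrangement passes through one of its atoms, whatever its direction; the constraint $p-p'\in\bigcap_{l\neq i}v_l^{\perp}$ only prevents hyperplanes of the other families from \emph{separating} the atoms, not from touching one. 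So an adversarial arrangement can spend one hyperplane orthogonal to $L_2$ on an atom of a family-$1$ auxiliary measure, cover the remaining auxiliaries one-to-one, and keep \emph{two} hyperplanes orthogonal to $L_1$ free near $K$. Two parallel hyperplanes with the fixed normal $v_1$ can bisect \emph{any} two measures: project onto the line spanned by $v_1$ and apply the one-dimensional necklace (Hobby--Rice) splitting with two cuts. Hence the intended reduction to the $(1,1)$ base case of \cite{Karasev:2016cn} fails; replacing atoms by short segments does not help, since a transversal hyperplane through a segment's median also bisects it. Your closing claim that the oddness of $k_1,k_2$ ``reduces the failure mode precisely to the $(1,1)$-case'' is asserted, not argued, and it is exactly the missing content.

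For comparison, the paper sidesteps the matching problem entirely: it places $M/2$ pairs of short parallel segments (none parallel nor orthogonal to any $L_i$, pairs well separated), so that each pair requires at least two hyperplanes of the arrangement, hence exactly two; and two hyperplanes can bisect a pair only if they are parallel, because the projection onto $\operatorname{span}(L_i,L_j)$ reproduces the two-dimensional counterexample of \cite{Karasev:2016cn}. The hyperplanes are thus forced into same-family couples, which is impossible when $k_1$ is odd; the case of odd $M$ removes one segment and uses the oddness of $k_2$ to run the same parity count after discarding the hyperplane through the lone segment. In other words, the parity hypothesis is exploited through a global counting argument over all pairs, not by isolating a single base pair, and that is the mechanism your proposal would need to replace or recover.
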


\begin{proof}
    We will divide the proof into two cases, when $M$ is even and when $M$ is odd.

    \textbf{Case 1.} $M$ is even.

    In this case, consider a set of $M/2$ segments in $\rr^d$ such that none of them is parallel nor orthogonal to an $L_i$, and their midpoints are in general position.  Assume that the segments are sufficiently short so that no hyperplane orthogonal to an $L_i$ can intersect two of the constructed segments.

    Finally duplicate each segment and translate their copy orthogonally in a direction that is neither parallel nor orthogonal to any $L_i$.  This gives us in total $M/2$ small pairs of parallel hyperplanes.  We distribute each measure uniformly in one of these $M$ little segments.

    For any two directions $L_i, L_j$, if we project a pair of parallel segments onto $\Pi= \operatorname{span
    }(L_i, L_j)$, we obtain exactly the two-dimensional example constructed by Karasev, Rold\'an-Pensado, and Sober\'on \cite{Karasev:2016cn}.  It is impossible to bisect both projected segments using a line orthogonal to $L_i$ and a line orthogonal to $L_j$.

    Now we go back to $\rr^d$, and assume we have a valid hyperplane arrangement.  If any pair of segment is intersected by three or more hyperplanes, since we have $M/2$ pairs of segments and $M$ hyperplanes, there must be another pair of segments that is intersected at most one hyperplane.  Using just one hyperplane it is impossible to bisect the pair, and we would be done.  If every pair of segments is intersected by exactly two hyperplanes, by construction we can only bisect all of them if each pair is intersected by a pair of parallel hyperplanes.  Since there is an odd number of hyperplanes orthogonal to $L_1$, at least one of them will be paired with a hyperplane with another fixed direction, so there will be a pair of parallel segments we are not bisecting.

    \textbf{Case 2.} $M$ is odd.

    In this case, we construct $(M+1)/2$ pairs of parallel segments as before, and then remove one segment of the final pair.  Assume that we have a valid arrangement of hyperplanes that bisects this last segment.  If at least two hyperplanes cut this last segment, then we do not have enough hyperplanes to bisect the remaining $(M-1)/2$ pairs of tiny parallel segments.  If exactly one hyperplane cuts the last segment, we may assume without loss of generality that is is not one of the hyperplanes orthogonal to $L_1$ (otherwise, interchange the roles of $L_1$ and $L_2$ in the rest of the argument).  Then, since we still have to use an odd number of hyperplanes orthogonal to $L_1$, the argument follows as in Case $1$.
\end{proof}

\subsection{A  counterexample to Langerman's conjecture}

As mentioned in the introduction, Langerman's conjecture has been confirmed when $d$ is a power of two by Hubard and Karasev \cite{Hubard2020}. The general conjecture corresponds to the case $k_1 = \dots = k_n = 1$ and $L_1 = \dots = L_n = \rr^d$ of \cref{thm:main}.  Our conditions on the parity of $N$ match the conditions found by Hubard and Karasev, which cannot be improved in these dimensions.

\begin{theorem} There exists a set of six mesures in $\mathbb R^3$ that cannot be bisected by two planes.
\end{theorem}

The precise construction was built by trying to extend the Karasev--Rold\'an-Pensado--Sober\'on construction \cite{Karasev:2016cn} described in the previous subsection. 
The example is based on the idea of having three pairs of parallel segments, each concentrated near a point, with well chosen segment directions. We discretized this to a measure supported on three points almost aligned with the segment. We call each of these measures a \emph{needle}.

   Since needles are supported on three points each, the validity of the counterexample amounts to checking \[
3^6 \frac{\binom{6}{3}}{2}.
=7290\]
pairs of planes, each of which is pinned by three points of the needles, and also six points, one from each measure, lie exactly on the union of the planes. 

To find the precise points, an LLM was used. The first example we obtained was in generic position. The example we produce here is simpler. It is not in general position, but none of the relevant planes contains more than three points. \vspace{5mm}

\begin{tabular}{cccc}

$S_1:$ & $(1050,-410,-61)$ & $(1106,-408,-59)$ & $(1169,-410,-61)$ \\
$S_2:$ & $(1045,-409,-57)$ & $(1101,-407,-55)$ & $(1164,-409,-57)$ \\
$S_3:$ & $(2,-2,60)$       & $(3,-3,0)$        & $(2,-2,-60)$      \\
$S_4:$ & $(-3,3,61)$       & $(-2,2,0)$        & $(-3,3,-61)$      \\
$S_5:$ & $(-831,1000,35)$  & $(-830,1059,35)$  & $(-831,1119,35)$  \\
$S_6:$ & $(-833,1001,38)$  & $(-832,1060,38)$  & $(-831,1120,38)$  \\

\end{tabular}\\

Notice that,

\[ c_x = S_2 - S_1 = (-5,\,1,\,4),\] \[ c_y = S_6 - S_5 = (-2,\,1,\,3), \]
\[c_{z,j}= S_4 - S_3 = (-5,\,5,\,-j), \quad j \in \{-1,0,1\}. \]
\vspace{2mm}

The translation vectors $c_x$ and $c_y$ are not far from being aligned, and not far from being perpendicular to $c_{z,j}$. Measures $S_1$ and $S_2$ are almost parallel to the $x$-axis, we call these measures the $x$-cluster. Measures $S_5,S_6$ are almost parallel to the $y$-axis, we call these measures the $y$-cluster. Finally, measures
$S_3$ and $S_4$ are parallel to the $z$-axis. It seems that all constructions near this one of the form $S_4=S_3+v$ for some constant vector $v$ fail. Experimentally, $8$ bisecting arrangements appear when we tried a constant translation vector. 
To eliminate them the shear on the translation of from one needle on the $z$-cluster to the other is required.



We had a computer program test all 7290 possible candidates and none were bisecting all six sets.  The script is available at \href{https://github.com/chiral-polytope/two-plane-checkerboard-verifier}{https://github.com/chiral-polytope/two-plane-checkerboard-verifier}
\begin{figure}
\begin{tikzpicture}[
  ax/.style={-{Stealth[length=4pt]},line width=0.2pt,black!55},
  ndl/.style={line width=0.6pt},
  pt/.style={circle,fill,inner sep=0.9pt}]
  \xyz{Xm}{-1000}{0}{0}\xyz{Xp}{1300}{0}{0}
  \xyz{Ym}{0}{-550}{0}\xyz{Yp}{0}{1250}{0}
  \xyz{Zm}{0}{0}{-380}\xyz{Zp}{0}{0}{380}
  \draw[ax] (Xm)--(Xp); \draw[ax] (Ym)--(Yp); \draw[ax] (Zm)--(Zp);
  \xyz{A1}{1050}{-410}{-61}\xyz{A0}{1106}{-408}{-59}\xyz{A2}{1169}{-410}{-61}
  \draw[ndl,nA] (A1)--(A2); \node[pt,nA] at (A1){}; \node[pt,nA] at (A0){}; \node[pt,nA] at (A2){};
  \xyz{B1}{1045}{-409}{-57}\xyz{B0}{1101}{-407}{-55}\xyz{B2}{1164}{-409}{-57}
  \draw[ndl,nB] (B1)--(B2); \node[pt,nB] at (B1){}; \node[pt,nB] at (B0){}; \node[pt,nB] at (B2){};
  \xyz{C1}{2}{-2}{60}\xyz{C0}{3}{-3}{0}\xyz{C2}{2}{-2}{-60}
  \draw[ndl,nC] (C1)--(C2); \node[pt,nC] at (C1){}; \node[pt,nC] at (C0){}; \node[pt,nC] at (C2){};
  \xyz{D1}{-3}{3}{61}\xyz{D0}{-2}{2}{0}\xyz{D2}{-3}{3}{-61}
  \draw[ndl,nD] (D1)--(D2); \node[pt,nD] at (D1){}; \node[pt,nD] at (D0){}; \node[pt,nD] at (D2){};
  \xyz{E1}{-831}{1000}{35}\xyz{E0}{-830}{1059}{35}\xyz{E2}{-831}{1119}{35}
  \draw[ndl,nE] (E1)--(E2); \node[pt,nE] at (E1){}; \node[pt,nE] at (E0){}; \node[pt,nE] at (E2){};
  \xyz{F1}{-833}{1001}{38}\xyz{F0}{-832}{1060}{38}\xyz{F2}{-833}{1120}{38}
  \draw[ndl,nF] (F1)--(F2); \node[pt,nF] at (F1){}; \node[pt,nF] at (F0){}; \node[pt,nF] at (F2){};
\end{tikzpicture}
\end{figure}

\section{Acknowledgments}

This work started during the 2022 ``Extremal Combinatorics and Geometry'' workshop at BIRS, we thank both BIRS and the workshop organizers. We also thank Xavier Goaoc, Edgardo Rold\'an-Pensado, and Patrick Schnider for the helpful discussions.

\begin{bibdiv}
\begin{biblist}

\bib{cccg98-akiyama-perfect}{inproceedings}{
      author={Akiyama, Jin},
      author={Nakamura, Gisaku},
      author={Rivera-Campo, Eduardo},
      author={Urrutia, Jorge},
       title={{Perfect Divisions of a Cake}},
        date={1998},
      series={Proceedings of the 10th Canadian Conference on Computational
  Geometry (CCCG'98)},
}

\bib{alon2005crossing}{article}{
      author={Alon, Noga},
      author={Pach, J{\'a}nos},
      author={Pinchasi, Rom},
      author={Radoi{\v{c}}i{\'c}, Rado{\v{s}}},
      author={Sharir, Micha},
       title={Crossing patterns of semi-algebraic sets},
        date={2005},
     journal={Journal of Combinatorial Theory, Series A},
      volume={111},
      number={2},
       pages={310\ndash 326},
}

\bib{Alon:1985cy}{article}{
      author={Alon, Noga},
      author={West, D.~B.},
       title={{The Borsuk-Ulam theorem and bisection of necklaces}},
        date={1986},
     journal={Proceedings of the American Mathematical Society},
      volume={98},
      number={4},
       pages={623\ndash 628},
}

\bib{Blagojevic2022}{article}{
      author={Blagojevi\'{c}, Pavle V.~M.},
      author={Dimitrijevi\'{c}~Blagojevi\'{c}, Aleksandra},
      author={Karasev, Roman},
      author={Kliem, Jonathan},
       title={More bisections by hyperplane arrangements},
        date={2022},
        ISSN={0179-5376,1432-0444},
     journal={Discrete Comput. Geom.},
      volume={67},
      number={1},
       pages={33\ndash 64},
         url={https://doi.org/10.1007/s00454-021-00337-w},
}

\bib{Blagojevic:2018jc}{article}{
      author={Blagojevi{\'c}, Pavle V.~M.},
      author={Frick, Florian},
      author={Haase, Albert},
      author={Ziegler, G{\"u}nter~M.},
       title={Topology of the {G}r{\"u}nbaum--{H}adwiger--{R}amos hyperplane
  mass partition problem},
        date={2018},
     journal={Transactions of the American Mathematical Society},
      volume={370},
      number={10},
       pages={6795\ndash 6824},
}

\bib{Barany:2008vv}{article}{
      author={B{\'a}r{\'a}ny, Imre},
      author={Hubard, Alfredo},
      author={Jerónimo, Jesús},
       title={{Slicing Convex Sets and Measures by a Hyperplane}},
        date={2008},
        ISSN={0179-5376},
     journal={Discrete \& Computational Geometry},
      volume={39},
      number={1-3},
       pages={67\ndash 75},
}

\bib{Barba:2019to}{article}{
      author={Barba, Luis},
      author={Pilz, Alexander},
      author={Schnider, Patrick},
       title={{Sharing a pizza: bisecting masses with two cuts}},
        date={2019-04},
     journal={arXiv},
      volume={cs.CG},
      eprint={1904.02502},
         url={arXiv},
}

\bib{Blagojevic:2018gt}{article}{
      author={Blagojevi{\'c}, Pavle V.~M.},
      author={Sober{\'o}n, Pablo},
       title={Thieves can make sandwiches},
        date={2018},
     journal={Bulletin of the London Mathematical Society},
      volume={50},
      number={1},
       pages={108\ndash 123},
}

\bib{Fox:2012ks}{article}{
      author={Fox, Jacob},
      author={Gromov, Mikhail},
      author={Lafforgue, Vincent},
      author={Naor, Assaf},
      author={Pach, J{\'a}nos},
       title={{Overlap properties of geometric expanders}},
        date={2012},
     journal={Journal f{\"u}r die reine und angewandte Mathematik (Crelles
  Journal)},
      volume={671},
  url={http://www.degruyter.com/view/j/crll.2012.2012.issue-671/CRELLE.2011.157/CRELLE.2011.157.xml},
}

\bib{FHMRA19}{article}{
      author={Fradelizi, Matthieu},
      author={Hubard, Alfredo},
      author={Meyer, Mathieu},
      author={R{old{\'a}n-Pensado}, Edgardo},
      author={Zvavitch, Artem},
       title={{Equipartitions and Mahler volumes of symmetric convex bodies}},
        date={2019},
     journal={arXiv preprint arXiv:1904.10765},
}

\bib{Guth:2015tu}{article}{
      author={Guth, Larry},
      author={Katz, Nets~Hawk},
       title={{On the Erdős distinct distances problem in the plane}},
        date={2015},
     journal={Annals of Mathematics},
      volume={181},
      number={1},
       pages={155\ndash 190},
}

\bib{Gromov:2003ga}{article}{
      author={Gromov, Mikhail},
       title={{Isoperimetry of waists and concentration of maps}},
        date={2003},
     journal={Geometric and Functional Analysis},
      volume={13},
      number={1},
       pages={178\ndash 215},
}

\bib{Guth.2010}{article}{
      author={Guth, Larry},
       title={{The endpoint case of the Bennett–Carbery–Tao multilinear
  Kakeya conjecture}},
        date={2010-12},
        ISSN={0001-5962},
     journal={Acta Mathematica},
      volume={205},
      number={2},
       pages={263\ndash 286},
}

\bib{Guth:2016cy}{article}{
      author={Guth, Larry},
       title={{A restriction estimate using polynomial partitioning}},
        date={2016},
     journal={Journal of the American Mathematical Society},
      volume={29},
      number={2},
       pages={371\ndash 413},
}

\bib{Goldberg:1985jr}{article}{
      author={Goldberg, Charles~H.},
      author={West, D.~B.},
       title={{Bisection of Circle Colorings}},
        date={1985},
     journal={SIAM Journal on Algebraic Discrete Methods},
      volume={6},
      number={1},
       pages={93\ndash 106},
}

\bib{Hubard2020}{inproceedings}{
      author={Hubard, Alfredo},
      author={Karasev, Roman},
       title={Bisecting measures with hyperplane arrangements},
organization={Cambridge University Press},
        date={2020},
   booktitle={Mathematical proceedings of the cambridge philosophical society},
      volume={169},
       pages={639\ndash 647},
}

\bib{Hobby:1965bh}{article}{
      author={Hobby, C.~R.},
      author={Rice, John~R.},
       title={{A Moment Problem in L 1 Approximation}},
        date={1965},
     journal={Proceedings of the American Mathematical Society},
      volume={16},
      number={4},
       pages={665},
  url={http://www.jstor.org/discover/10.2307/2033900?uid=3739728\&uid=2\&uid=4\&uid=3739256\&sid=21104571400883},
}

\bib{Karasev:2014gi}{article}{
      author={Karasev, Roman~N.},
      author={Hubard, Alfredo},
      author={Aronov, Boris},
       title={{Convex equipartitions: the spicy chicken theorem}},
        date={2014},
     journal={Geometriae Dedicata},
      volume={170},
      number={1},
       pages={263\ndash 279},
}

\bib{Karasev:2016cn}{article}{
      author={Karasev, Roman~N.},
      author={Rold{\'a}n-Pensado, Edgardo},
      author={Sober{\'o}n, Pablo},
       title={Measure partitions using hyperplanes with fixed directions},
        date={2016},
     journal={Israel journal of mathematics},
      volume={212},
      number={2},
       pages={705\ndash 728},
}

\bib{Karthik2017}{incollection}{
      author={Karthik, C.~S.},
      author={Saha, Arpan},
       title={Ham sandwich is equivalent to {B}orsuk-{U}lam},
        date={2017},
   booktitle={33rd {I}nternational {S}ymposium on {C}omputational {G}eometry},
      series={LIPIcs. Leibniz Int. Proc. Inform.},
      volume={77},
   publisher={Schloss Dagstuhl. Leibniz-Zent. Inform., Wadern},
       pages={Art. No. 24, 15},
}

\bib{lehec2009yao}{article}{
      author={Lehec, Joseph},
       title={{On the Yao--Yao partition theorem}},
        date={2009},
     journal={Archiv der Mathematik},
      volume={92},
      number={4},
       pages={366\ndash 376},
}

\bib{Longueville2008}{article}{
      author={Longueville, Mark~de},
      author={{\v{Z}}ivaljevi\'{c}, Rade~T.},
       title={{Splitting multidimensional necklaces}},
        date={2008},
        ISSN={0001-8708},
     journal={Advances in Mathematics},
      volume={218},
      number={3},
       pages={926\ndash 939},
}

\bib{Matousek:1994kq}{article}{
      author={Matou\v{s}ek, Ji\v{r}\'{í}},
       title={{Geometric range searching}},
        date={1994-12},
     journal={ACM Computing Surveys},
      volume={26},
      number={4},
       pages={422\ndash 461},
         url={http://dl.acm.org/citation.cfm?id=197408},
}

\bib{RoldanPensado2022}{article}{
      author={Rold\'{a}n-Pensado, Edgardo},
      author={Sober\'{o}n, Pablo},
       title={A survey of mass partitions},
        date={2022},
        ISSN={0273-0979},
     journal={Bull. Amer. Math. Soc. (N.S.)},
      volume={59},
      number={2},
       pages={227\ndash 267},
         url={https://doi.org/10.1090/bull/1725},
}

\bib{Schnider2021}{incollection}{
      author={Schnider, Patrick},
       title={The complexity of sharing a pizza},
        date={2021},
   booktitle={32nd {I}nternational {S}ymposium on {A}lgorithms and
  {C}omputation},
      series={LIPIcs. Leibniz Int. Proc. Inform.},
      volume={212},
   publisher={Schloss Dagstuhl. Leibniz-Zent. Inform., Wadern},
       pages={Art. No. 13, 15},
}

\bib{Soberon2021a}{article}{
      author={Sober{\'o}n, Pablo},
      author={Takahashi, Yuki},
       title={Lifting methods in mass partition problems},
        date={2022},
     journal={Int. Math. Res. Not. IMRN},
        note={rnac224},
}

\bib{Stone:1942hu}{article}{
      author={Stone, A.~H.},
      author={Tukey, J.~W.},
       title={{Generalized “sandwich” theorems}},
        date={1942},
        ISSN={0012-7094},
     journal={Duke Mathematical Journal},
      volume={9},
      number={2},
       pages={356\ndash 359},
}

\bib{Steinhaus1938}{article}{
      author={Steinhaus, Hugo},
       title={A note on the ham sandwich theorem},
        date={1938},
     journal={Mathesis Polska},
      volume={9},
       pages={26\ndash 28},
}

\bib{yao1985general}{inproceedings}{
      author={Yao, Andrew~C},
      author={Yao, F~Frances},
       title={A general approach to d-dimensional geometric queries},
        date={1985},
   booktitle={{Proceedings of the seventeenth annual ACM symposium on Theory of
  Computing}},
       pages={163\ndash 168},
}

\bib{Zivaljevic2017}{incollection}{
      author={{\v{Z}}ivaljevi{\'c}, Rade~T.},
       title={Topological methods in discrete geometry},
        date={2017},
   booktitle={{Handbook of Discrete and Computational Geometry}},
     edition={Third},
   publisher={CRC Press},
       pages={551\ndash 580},
}

\end{biblist}
\end{bibdiv}

\end{document}